\newtheorem{theorem}{Theorem}[section]
\newtheorem{corollary}[theorem]{Corollary}
\newtheorem{lemma}[theorem]{Lemma}
\newtheorem{proposition}[theorem]{Proposition}
\theoremstyle{definition}
\newtheorem{definition}[theorem]{Definition}
\newtheorem{example}[theorem]{Example}
\newtheorem{remark}[theorem]{Remark}
\DeclareMathOperator{\Sp}{Spec}
\DeclareMathOperator{\prim}{Prim}
\DeclareMathOperator{\Hom}{Hom}
\DeclareMathOperator{\GL}{GL}
\begin{document}

\newcommand{\Ga}{\mathbb{G}_a}
\newcommand{\Gap}[1]{\mathbb{G}_{a,#1}}
\newcommand{\alf}[1]{\mathbb{\alpha}_{#1}}
\newcommand{\Zp}{\mathbb{Z}/p\mathbb{Z}}
\newcommand{\gen}[1]{\langle #1 \rangle}
\newcommand{\Fq}{\mathbb{F}_q}
\newcommand{\pa}{\pi}
\newcommand{\Th}{\Theta}
\newcommand{\Thp}{\Theta^{(p)}}
\newcommand{\mx}[1]{\left( \begin{array}{c c} #1 \end{array} \right)}
\newcommand{\them}{them there}

\title{Primitively generated {H}opf orders  in characteristic $p$}
\author{Alan Koch}
\address{Department of Mathematics, Agnes Scott College, 141 E. College Ave., Decatur,
GA\ 30030 USA}
\email{akoch@agnesscott.edu}
\date{\today        }

\begin{abstract}
Let $R$ be a characteristic $p$ discrete valuation ring with field of fractions $K$. Let $H$ be a commutative, cocommutative $K$-Hopf algebra of $p$-power rank which is generated as a $K$-algebra by primitive elements. We construct all of the $R$-Hopf orders of $H$ in $K$; each Hopf order corresponding to a solution to a single matrix equation. For $R$ complete, we give explicit examples of Hopf orders in some rank $p^2$ $K$-Hopf algebras.

\end{abstract}
\maketitle

\section{Introduction}

Let $R$ be a Dedekind domain with field of fractions $K$, and let $H$ be a finite $K$-Hopf algebra. Then an $R$-Hopf order in $K$ is a finitely generated projective $R$-submodule $H_0$ of $H$ which is an $R$-Hopf algebra such that $KH_0=H$, where the coalgebra structure maps are the restrictions of the coalgebra structure maps on $H$. Works of, e.g.,  Byott (\cite{Byott93a},\cite{Byott04}), Childs (\cite{ChildsGreitherMossSauerbergZimmerman98}, \cite{ChildsUnderwood03},\cite{ChildsUnderwood04}, \cite{ChildsSmith05}), Greither (\cite{Greither92}), Larson (\cite{Larson76}), and (especially) Underwood (\cite{Underwood94},\cite{Underwood96}, \cite{UnderwoodChilds06},\cite{Underwood06}, \cite{Underwood08}) have focused on computing $R$-Hopf orders in the case where  $R$ is a characteristic zero complete discrete valuation ring of residue characteristic $p$ containing a $p^{\text{th}}$ root of unity. Typically, these authors have focused on small classes of Hopf algebras (sometimes just a single Hopf algebra), almost always group rings of $p$-power rank. (A notable exception is \cite{Byott93a}, who considers the more general rank $p^2$ case.) Additionally, the author (\cite{KochMalagon07}) has used Larson orders to explicitly describe Hopf orders in elementary abelian group rings; other constructions in (\cite{Koch07a} \cite{Koch12a}) have described orders in abelian group rings of $p$-power rank using Breuil modules (in the former work) and Breuil-Kisin modules (in the latter).

Alternatively, for $G$ a finite flat group scheme defined over $K$, one may be interested in finding models of $G$ over $R$, that is, finite flat group schemes $G_0$ over $R$ such that $G_0 \times_{\Sp R} \Sp K \cong G$. Constructing models for certain group schemes is of interest to many, for example Mezard, Romagny and Tossici (\cite{MezardRomagnyTossici13}, \cite{Romagny12}, \cite{Tossici08}, \cite{Tossici10}). The problem of finding models for these group schemes is the geometric formulation of constructing Hopf orders, and typically the focus of these works is on a small family of group schemes. If $G$ and $G_0$ are affine, say $G=\Sp(H),\;G_0=\Sp(H_0)$, then $H_0$ is an $R$-Hopf order in $H$; in the proper categories of group schemes and Hopf algebras, one sees that these are two different descriptions of the same problem. 

Now suppose $R$ and $K$ have characteristic $p>0$. This paper is an attempt by the author to construct Hopf orders for a larger family of $K$-Hopf algebras, namely the ``primitively generated" Hopf algebras. As one might expect, a finite Hopf algebra is primitively generated if it is generated (as an algebra) by its primitive elements. While many Hopf algebras are not primitively generated, this family is large enough to include $(K\Gamma)^*$ , the linear dual of the group ring, where $\Gamma$ a finite elementary $p$-group; as well as $K[t]/(t^{p^n})$, the Hopf algebra which represents the $n^{\text{th}}$ Frobenius kernel of the additive group scheme.

We construct our Hopf orders by reducing the problem to one of linear algebra. Associated to $H$, a $K$-Hopf algebra of rank $p^n$,  is an $n\times n$ matrix $B$ with entries in $K$ which keeps track of the $p^{\text{th}}$ power of a set of primitive generators for $H$. (This matrix $B$ does depend on the choice of generators, as we shall see.) We choose a matrix $\Th \in \GL_n(K)$ such that $\Th^{-1} B \Thp$ has its entries in $R$, where $\Thp$ is obtained from $\Th$ by raising each entry to the $p^{\text{th}}$ power. Then $\Th^{-1} B \Thp$ is the analogue to $B$ for some $R$-Hopf algebra $H_0$, and $\Th$ describes an embedding of $H_0$ into $H$, thereby realizing $H_0$ as an $R$-Hopf order in $H$. 

Additionally, we will give a simple condition for when two choices of $\Th$ result in the same $R$-Hopf order. Note that this is a different concept from ``isomorphic Hopf algebras", since we can have distinct (as subsets of $H$) Hopf orders which are isomorphic. 

In the case where $R$ is complete, we can impose helpful restrictions on $\Th$. Namely, we can assume $\Th$ is lower triangular, and that the valuation of each entry below the diagonal is bounded by the valuation of the diagonal entry in that row. This allows for a much more explicit computation of Hopf orders, which we do in the case where $H$ has dimension $p^2$.

In \cite{ChildsUnderwood03}, the authors conjecture that ``a classification of Hopf orders [in cyclic group rings] of rank $p^n$ should involve $n(n+1)/2$ parameters: $n$ ‘valuation’ parameters and $n(n-1)/2$ ‘unit parameters’". A similar conjecture holds for Hopf orders in elementary abelian group rings appears in \cite{ChildsGreitherMossSauerbergZimmerman98} and was echoed in \cite{MezardRomagnyTossici13}. By restricting to these lower triangular matrices, the Hopf orders we construct (in the complete case) also depend on $n$ valuation parameters (the diagonal elements) and $n(n-1)/2$ other parameters (not necessarily units). While this similarity is striking, this is also the point: rather than working piecemeal we can consider larger families of $K$-Hopf algebras without losing a lot of specificity. One sees similarities between these results despite the difference in Hopf algebra type as well as characteristic.

A stated above, there are many $K$-Hopf algebras for which the work presented here do not apply, for example group rings of rank $p^n$ for $n \ge 2$. We have chosen to focus on the primitively generated Hopf algebras because of their simplicity: one can use a matrix to completely describe the Hopf algebra. If a more general family of Hopf algebras is desired, it may be possible to use a generalized Dieudonn\'e module theory (see \cite{deJong93}), or characteristic $p$ Breuil-Kisin modules or frames (see \cite{Lau10}), or Falting's strict modules (\cite{Abrashkin06}) . However, along with being more complicated, the correspondence between these modules and their Hopf algebras is not very transparent. By restricting to the primitively generated Hopf algebras, we are allowed to use a Dieudonn\'e module theory for which the correspondence is evident. Additionally, most of the alternative structures above construct Hopf algebras up to isomorphism; care must be taken in cases where a Hopf algebra can have isomorphic Hopf orders. In our work,  Dieudonn\'e modules classify Hopf orders up to isomorphism as well, however given a map between Dieudonn\'e modules the induced map on Hopf algebras can be made very explicit; this will allow us to differentiate between isomorphic, but not equal, Hopf algebras.

The paper is organized as follows. We start by recalling the theory of Dieudonn\'e modules for primitively generated Hopf algebras over an $\mathbb{F}_p$-algebra; typically this theory is described in terms of group schemes rather than Hopf algebras, but we will provide both interpretations. We do not require the $\mathbb{F}_p$-algebra to be a perfect field (or even a field), however it agrees with the classical Dieudonn\'e module theory as developed in \cite{Fontaine75a} if it is. Since the Dieudonn\'e module functor (or functors, since we are interested in two different $\mathbb{F}_p$-algebras, $R$ and $K$) behaves well with base change, we give a condition for when two $R$-Hopf algebras are Hopf orders in the same $K$-Hopf algebra. This condition, that there exists a Dieudonn\'e module map between the respective Dieudonn\'e modules which becomes an isomorphism when base changed to $K$, is precisely the analogue of \cite[2.4.7]{Kisin07} for Breuil-Kisin modules in characteristic zero. Next, we introduce the matrix $\Th$ mentioned above, and show that $\Th$ gives rise to a Hopf algebra if and only if $\Th^{-1}B\Thp\in M_n(R)$. We then restrict to complete discrete valuation rings and show that we may always assume that $\Th$ is of the lower-triangular form mentioned above. Finally, we restrict to Hopf algebras of rank $p^2$ and do four explicit computations, including examples where $H$ is the dual to an elementary group ring, $H$ is a monogenic separable truncated polynomial algebra, and $H$ is a monogenic local truncated polynomial algebra. (Here, the term ``monogenic" refers to an algebra generated by a single element.)

Throughout, all group schemes (with the exception of $\mathbb{G}_a$, below) are affine, commutative, flat, and of $p$-power rank. All Hopf algebras are commutative, cocommutative, projective, and of $p$-power rank. Also, our Hopf algebras $H$ will typically be defined as quotients, e.g. $H=K[t]/(t^p)$; we will, by abuse of notation, refer to elements of $H$ by their coset representative, e.g. $t\in K[t]/(t^p)$. No confusion should arise.

\section{Dieudonn\'{e} Module Theory}

Let $R$ be an $\mathbb{F}_{p}$-algebra, and let $\mathbb{G}_{a,R}$ be the
additive group scheme over $R$. When $R$ is understood, we will simply write
$\mathbb{G}_{a}.$ In this section, we will outline a connection between
certain modules and finite $R$-group schemes $G$ which embed in $\mathbb{G}%
_{a}^{n}$ for some $n$ sufficiently large; such group schemes will be called
{\it additive} since the group operation is inherited from $\Ga^n$, and are necessarily affine and commutative. Note that additive group schemes are finite, and should not be confused with {\it the} additive group scheme $\Ga$.

The correspondence we describe below is well-known -- see, e.g., \cite{deJong93}. 

\subsection{Geometric interpretation}

By considering scalar multiplication and the Frobenius morphism on
$R$-algebras, it is not hard to see that $\operatorname*{End}_{R\text{-Gr}%
}\left(  \mathbb{G}_{a}\right)  \cong R\left[  F\right]  ,$ where $R\left[
F\right]  $ is the noncommutative polynomial ring with $Fa=a^{p}F$ for all
$a\in R.$ Now for any $R$-group scheme $G$ we let $D^{\ast
}(G)=\operatorname*{Hom}\nolimits_{R}\left(  G,\mathbb{G}_{a}\right)  .$ Then
$R\left[  F\right]  $ acts on $D^{\ast}(G)$ via, for $S$ an $R$-algebra,
$\left(  F\cdot f\right)  \left(  s\right)  =F\cdot(f(s)),$ $s\in
S,\;f:G(S)\rightarrow\mathbb{G}_{a}(S).$ This gives $D^{\ast}(G)$ the
structure of an $R[F]$-module, and $D^{\ast}$ is a contravariant functor from
$R$-groups to $R\left[  F\right]  $-modules. This functor does not induce a
categorical anti-equivalence, however if we let AG$_{R}$ denote the category
of finite flat additive groups over $R$, and FF$_{R}$ the category of finite
$R[F]$-modules which are free as $R$-modules, then $D^{\ast}$ does give rise
to a categorical anti-equivalence: AG$_{R}\rightarrow$FF$_{R}$.  Of course, the morphisms in FF$_R$ are the $R$-linear maps which respect the actions of $F$.

Let $M$ be an object in FF$_{R}.$ Since it is free over $R$ we can pick an
$R$-basis $\{e_{1},\dots,e_{n}\}$ for $M.$ Then there exist $a_{j,i}\in R,\;1\leq
i,j\leq n$ such that $Fe_{i}=\sum_{j}a_{j,i}e_{j}.$ For any $R$-algebra $S$ we
define $f_{S}:S^{n}\rightarrow S^{n}$ by%
\[
f_{S}\left(  s_{1},\dots,s_{n}\right)  =\left(  s_{1}^{p}-\sum_{j=1}%
^{n}a_{j,1}s_{j},s_{2}^{p}-\sum_{j=1}^{n}a_{j,2}s_{j},\dots,s_{n}^{p}%
-\sum_{j=1}^{n}a_{j,n}s_{j}\right)  .
\]
Since $S$ has characteristic $p$ one can easily check that $f_{S}$ is a
homomorphism of additive groups. Moreover, this homomorphism is functorial: if
$S_{1},S_{2}$ are $R$-algebras and $g:S_{1}^{n}\rightarrow S_{2}^{n}$ is a
group homomorphism, then $f_{S_{2}}g=gf_{S_{1}}.$ Thus we have an $R$-group
scheme homomorphism $f:\mathbb{G}_{a}^{n}\rightarrow\mathbb{G}_{a}^{n};$ the
kernel of this map is a finite flat group scheme and $D^{\ast}\left(  \ker
f\right)  =M.$

Clearly, the product of any two elements in AG$_R$ is in AG$_R$ and
\[D^*(G_1 \times G_2)=\Hom(G_1\times G_2,\Ga) \cong \Hom(G_1,\Ga)\times \Hom(G_2,\Ga) = D^*(G_1)\times D^*(G_2),\]
hence this functor preserves finite products.

\subsection{Algebraic interpretation}

While the Dieudonn\'{e} correspondence is usually expressed in the literature
geometrically, it will be more natural and useful for us to use the language
of Hopf algebras. 

\begin{definition}
Let $R$ be an $\mathbb{F}_p$-algebra, and let $H$ be an $R$-Hopf algebra. An element $t\in H$ is called {\it primitive} if
\begin{equation}\label{prim}
\Delta(t) = t \otimes_R 1 + 1 \otimes_R t
\end{equation}
where $\Delta$ is the comultiplication on $H$.  The $R$-module of primitive elements is denoted $\prim(H)$. Furthermore, if no proper subalgebra of $H$ contains $\prim(H)$ then $H$ is said to be {\it primitively generated}. \end{definition}

A primitively generated Hopf algebra of rank $p^n$ is generated as an $R$-algebra by a set of (at most) $n$ primitive elements. Equivalently, $H$ is primitively generated if and only if $\Sp(H)$ embeds in $\Gap{R}^N$ for some $N$.

Let PG$_{R}$ denote the category of primitively generated Hopf algebras (of $p$-power rank).  Suppose $H$ is an object in PG$_R$ of rank $p^n$ with a set of primitive generators $\{t_1,\dots,t_n\}$, and let $G=\Sp(H)$. Since $\mathbb{G}_a=\Sp(R[t])$ with $t$ primitive, the projection $R[t_1,\dots,t_n]\to H$ induces a map of group schemes
\[G \to \mathbb{G}_a^n\]
which is injective. Conversely, any finite subgroup of $\mathbb{G}_a$ is represented by a primitively generated Hopf algebra.
Thus, we may define a
covariant functor $D_{\ast}$ from PG$_{R}$ to FF$_{R}$ by
\[
D_{\ast}(H)=D^{\ast}(\operatorname*{Spec}H).
\]
This clearly gives a categorical equivalence. More directly, we may express this as
\[
D_{\ast}(H)=\operatorname{Prim}(H).
\]

Now suppose $M$ is an object of FF$_{R}.$ As in the previous subsection, pick
an $R$-basis $\{e_{1},\dots,e_{n}\}$ for $M\ $and write $Fe_{i}=\sum_{j}%
a_{j,i}e_{j}$ for some $a_{j,i}\in R,\;1\leq i,j\leq n$. Let
\[
H=R[t_{1},\dots,t_{n}]/\{  t_{i}^{p}-\sum_{j=1}^{n}a_{j,i}t_{j}\}
,\;\Delta(t_{i})=t_{1}\otimes1+1\otimes t_{i}.
\]
Then $H$ is a finite abelian Hopf algebra and $D_{\ast}(H)=M.$ Furthermore, it
is evident that 
\[\operatorname{rank}_R H =p^{\operatorname{rank}_{R}M}.\]

In both the geometric and algebraic formulations of Dieudonn\'e modules, we found elements $a_{i,j}$ of $R$ corresponding to $M$. Note that we can just as easily define $M$ by choosing $a_{i,j}$'s. 

\begin{lemma}
For $1\leq i,j\leq n,$ pick $a_{i,j}\in R.$ Set $A=(a_{i,j})  \in
M_{n}(R).$ Let $M=R^{n}$, and make $M$ into an $R[F]$-module via
$Fe_{i}=Ae_{i},$ where $e_{i}$ is the $i^{\text{th}}$ standard basis vector in $R^{n}.$ Then $M$ is an object in FF$_{R},$ corresponding to an object in
PG$_{R}.$
\end{lemma}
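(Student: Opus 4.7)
The plan is to read off the entire statement from the constructions already given in the section: the only real content is that the formula $Fe_i = Ae_i$ does produce a well-defined $R[F]$-module structure, and then the rest of the lemma follows by applying the functors already built.

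First I would verify that the prescription $Fe_i = Ae_i$ extends uniquely to an $R[F]$-action on $M = R^n$. Because $M$ is free on $\{e_1,\dots,e_n\}$, every element has a unique expression $\sum_i r_i e_i$ with $r_i \in R$. An $R[F]$-module structure requires an additive endomorphism $F$ satisfying $F(rm) = r^p F(m)$, so there is exactly one extension of the basis prescription, namely
\[
F\left(\sum_i r_i e_i\right) = \sum_i r_i^p (A e_i).
\]
Additivity is immediate, and the semilinearity relation $Fr = r^p F$ is built in. Hence $M$ acquires a unique $R[F]$-module structure with $Fe_i = A e_i$.

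Next I would check $M \in$ FF$_R$. By construction $M = R^n$ is free of rank $n$ as an $R$-module, and it is finitely generated over $R[F]$ (indeed over the subring $R$) by $\{e_1,\dots,e_n\}$, so $M$ lies in FF$_R$.

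Finally, to exhibit a corresponding object of PG$_R$, I would invoke the construction displayed just before the lemma: set
\[
H = R[t_1,\dots,t_n]\bigl/\bigl(t_i^p - \textstyle\sum_{j=1}^n a_{j,i} t_j : 1 \le i \le n\bigr),
\]
with each $t_i$ declared primitive. The preceding discussion shows that $H$ is a primitively generated $R$-Hopf algebra of rank $p^n$, hence an object of PG$_R$, and that $D_\ast(H) = M$ under the equivalence PG$_R \to$ FF$_R$. The only point that would require any attention is checking that this $H$ really is well-defined as a Hopf algebra — i.e.\ that $\Delta(t_i^p - \sum_j a_{j,i} t_j) = (t_i^p - \sum_j a_{j,i} t_j)\otimes 1 + 1\otimes(t_i^p - \sum_j a_{j,i} t_j)$ — but this is immediate from the fact that both $t_i^p$ and each $t_j$ are primitive in $R[t_1,\dots,t_n]$ (using that we are in characteristic $p$, so the $p$th power of a primitive element is primitive). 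No obstacle of substance arises; the lemma is essentially a restatement of the previously established correspondence with a matrix taken as the starting datum rather than extracted from an abstract module.
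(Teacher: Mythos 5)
Your proposal is correct and matches the paper's treatment: the paper states this lemma without a separate proof, regarding it as immediate from the construction in the preceding subsection, where a basis of $M$ with $Fe_i=\sum_j a_{j,i}e_j$ yields the Hopf algebra $H=R[t_1,\dots,t_n]/\{t_i^p-\sum_j a_{j,i}t_j\}$ with $D_*(H)=M$. You simply make explicit the routine verifications the paper leaves implicit (uniqueness of the semilinear extension of $F$, membership in FF$_R$, and that the defining ideal is a Hopf ideal because $p$th powers of primitives are primitive in characteristic $p$).
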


If the resulting Hopf algebra above is denoted $H$, we will call the $A$ the matrix {\it associated to} $H$. It is easy to see that different choices of $A$ may result in the same Hopf algebra, hence $A$ is the matrix associated to $H$ for a given choice of primitive generators. With abuse of language, we will write ``$A$ is the matrix associated to $H$", rather than the more accurate ``$A$ is the matrix associated to $H$ given a set of primitive generators for $H$".

\subsection{Some examples}{\label{exsec}}

We now describe the Dieudonn\'{e} module which corresponds to some well-known
group schemes. In addition, we give an example of a Dieudonn\'{e} module which
corresponds to a monogenic Hopf algebra. We will see these examples again later when we construct Hopf orders.

\begin{example}\label{ap}
The unique simple object in the category of (finite flat commutative)
connected unipotent group schemes is the first Frobenius kernel of
$\mathbb{G}_{a}$ -- this group scheme is typically denoted $\mathbf{\alpha
}_{p}$. Recall that $F\in \operatorname*{End}(\mathbb{G}_{a})$ is the Frobenius map, so $\alf{p} = \ker F$. Since $F$ is trivial on $\alf{p}$ it follows that the corresponding Dieudonn\'e module $M$ is a free, rank one $R$-module with $Fm=0$ for all $m\in M$.

For an algebraic interpretation, $\alf{p}=\Sp (H)$ with $H=R[t]/(t^p),\; t\in\prim(H)$. Denoting by $\sigma$ the relative Frobenius map on $H$ we obtain the same module $M$ as above from by observing that $\sigma(t) = 0$. The ``matrix" associated to $H$ is the $1\times 1$ zero matrix.
\end{example}

\begin{example}\label{apn1}
As one of two generalizations of the previous example, we may consider the product $\alf{p}^n =\Sp(R[t_1,\dots,t_n]/(t_1^p,\dots,t_n^p)), \{t_i\}\subset\prim(H)$. Since $D^*$ preserves products, one readily sees that $D^*(\alf{p}^n)=\left(D^*(\alf{p})\right)^n$ is a free $R$-module of rank $n$ with $Fm=0$ for all $m\in M$. Again, the matrix corresponding to $H$ is, of course, the zero matrix.
\end{example}

\begin{example}\label{apn2}
Now consider the $n^{\text{th}}$ Frobenius kernel $F^n$ on $\Ga$, denoted $\alf{p^n}$. Then $\alf{p^n}=\Sp(H)$ with $H=R[t]/(t^{p^n})$ and $t\in\prim(H)$. Alternatively, we may write
\[ H = R[t_1,\dots,t_n]/(t_1^p,t_2^p-t_1\dots,t_n^p-t_{n-1}), t_i\in P(H) \]
by setting $t_i=t^{p^{i-1}}$ for all $i$. Then $\sigma(t_i) = t_{i-1}$ for all $i \ne 1$ and $\sigma(t_1)=0$. Thus the matrix associated to $H$ is
\[A = \left( \begin{array} {c c c c c} 0 & 1 &0 & \cdots  &0 \\
0  & 0  &1 &  \cdots &0\\
\vdots  & \vdots & \ddots & \ddots &\vdots \\
0 & 0  &0 &  \ddots& 1 \\
0 & 0 & 0 &\cdots & 0
\end{array} \right).\]
\end{example}

\begin{example}\label{Zp}
Let $\Zp$ denote the constant group scheme of rank $p$. Since $\Zp=\ker \{(F-1):\Ga \to \Ga\}$ this group scheme is a subgroup of $\Ga$, hence $(\Zp)^n$ is an additive group. Clearly, $Fm=m$ for all $m\in D^*((\Zp)^n)$, hence the matrix which describes the $F$-action on $D^*((\Zp)^n)$ is the $n\times n$ identity matrix.

In terms of Hopf algebras, $\Zp = \Sp (RC_p^*)$, where $RC_p$ is the group ring of rank $p$ and $(-)^*$ indicates duality. Note that we use $C_p$ to denote the cyclic group of order $p$ to differentiate it from the constant group scheme $\Zp$; we will also view $C_p$ as a multiplicative group. 

Typically, elements of the Hopf algebra $RC_p^*$ are described using a basis of orthogonal idempotents. If we write $C_p=\gen{g}$, then $RC_p^*$ is generated as an $R$-vector space by $\{\epsilon_j : 0\le j \le p-1\}$ with $\epsilon_j(x^i)=\delta_{i,j}$ where $\delta_{i,j}$ is the Kronecker delta function. The presence of these orthogonal idempotents show that $RC_p^* \cong R^n$ as $R$-algebras; additionally, $R[t]/(t^p-t) \cong R^n$ as $R$ algebras since $t^p-t$ splits completely. Thus $R[t]/(t^p-t) \cong RC_p^*$ as $R$-algebras, and the image of $t$ under this isomorphism is a primitive element of $RC_p^*$; setting $\Delta(t)=t\otimes 1 + 1 \otimes t$ makes this an isomorphism of $R$-Hopf algebras.

More generally, $(RC_p^*)^n$ can be written as $R[t_1,\dots,t_n]/(t_1^p-t_1,\dots, t_n^p-t_n)$, and the matrix associated to $(RC_p^*)^n$ corresponding to this choice of primitive generating set is the identity $I$, as above.

\end{example}

\begin{example}\label{xp}
Let $G=\Zp \times \alf{p}$. Then $H=R[t_1,t_2]/(t_1^p-t_1,t_2^p)$. The matrix associated to $H$ is
\[A=\left( \begin{array}{c c} 1& 0 \\ 0 & 0\end{array}\right).\]
More generally, the matrix associated to a tensor product of $R$-Hopf algebras is the direct sum of the matrices associated to each of the tensor factors.
\end{example}

\begin{example}\label{mono}
Finally, we give an example where a matrix $A\in M_n(R)$ gives an $R$-Hopf algebra $H$. Pick $n\ge 2$, and let $A$ be the cyclic permutation matrix
\[A = \left( \begin{array} {c c c c c} 0 & 1 &0 & \cdots  &0 \\
0  & 0  &1 &  \cdots &0\\
\vdots  & \vdots & \ddots & \ddots &\vdots \\
0 & 0  &0 &  \ddots& 1 \\
1 & 0 & 0 &\cdots & 0
\end{array} \right).\]

Then $H$ is generated by primitive elements $t_1,\dots,t_n$ with 
\[ t_i^p=\left\{ \begin{array}{c c}t_{i-1} & i >0  \\ t_n & i = 0 \end{array}
 \right. \]

If we set $t = t_n$ then 
\[H = R[t]/(t^{p^n}-t), \]
a monogenic Hopf algebra of rank $p^n$.
The corresponding group scheme is most easily described as $\ker \{ (F^n-1): \Ga \to \Ga\}$. This demonstrates that while a rank $n$ Dieudonn\'e module corresponds to a group scheme which can be embedded in $\Ga^n$, there may be embeddings into fewer copies of $\Ga$.
\end{example}

\section{Hopf Orders}

Throughout this section, let $R$ be a Dedekind domain of characteristic $p$, and let $K$ be its field of fractions. Of particular interest to us will be the cases $K=\Fq(\pa)$, $R=\Fq[\pa]$ and, especially, $K=\Fq((\pa))$, $R=\Fq[[\pa]]$.

\begin{definition}Let $H$ be a $K$-Hopf algebra. Then an {\it $R$-Hopf order in $K$} is a finitely generated projective $R$-submodule $H_0$ of $H$ which is an $R$-Hopf algebra using the operations inherited from $H$, such that $KH_0 = H$. Furthermore, if $H_1$ and $H_2$ are $R$-Hopf orders in the same $K$-Hopf algebra, we say $H_1$ and $H_2$ are {\it generically isomorphic}.
\end{definition}

 Using geometric language, $\Sp(H_0)$ is said to be a {\it model} for $\Sp(H)$. In fact, the term ``generically isomorphic" is borrowed from the geometric language, since saying that $\Sp(H_1)$ and $\Sp(H_2)$ are models of the same $K$-group scheme means that they have isomorphic generic fibers.

Note that in some literature $KH_0$ is expressed as $H_0\otimes_R K$. We prefer the former notation since it is imperative that we view a Hopf order as an $R$-Hopf algebra which is a subset of $H$, not one which embeds in $H$. We will see different Hopf orders which are isomorphic as Hopf algebras, so we adopt a notation which distinguishes between such objects.

Clearly, an $R$-Hopf order $H_0$ in a primitively generated $K$-Hopf algebra $H$ is primitively generated, however the converse is also true. If $G=\Sp(H)$ then $G$ embeds in some $\Gap{K}^N$. Thus, any model of $G$ embeds in a model of $\Gap{K}^N$ with connected fibers. By \cite[Cor. 3.4]{WeisfeilerDolgacev74}, the only such model of $\Gap{K}^N$ is $\Gap{R}^N$, hence the Hopf order is primitively generated.

Since $R$ and $K$ are both $\mathbb{F}_p$-algebras, each has a theory of Dieudonn\'e modules. The following shows that the modules respect the change of scalars from $R$ to $K$.

\begin{lemma}
Let $D_{*,R}$ (resp. $D_{*,K}$) denote the Dieudonn\'e module correspondence PG$_R \to $FF$_R$ (resp. PG$_K \to $FF$_K$). Let $G_0$ be an additive $R$-group scheme, and write $G=\Sp(H_0)$. Then 
\[KD_{*,R}(H_0)  = D_{*,K}(KH_0).\]
\end{lemma}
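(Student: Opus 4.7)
The plan is to identify both sides as submodules of $KH_0$ and then invoke flatness of $K$ over $R$. By the algebraic description in Section~2.2, $D_{*,R}(H_0) = \prim(H_0)$ and $D_{*,K}(KH_0) = \prim(KH_0)$. Since $H_0$ is finitely generated projective (hence torsion-free) over $R$, the canonical map $H_0 \hookrightarrow H_0 \otimes_R K = KH_0$ is injective, so $\prim(H_0)$ sits inside $KH_0$ and $KD_{*,R}(H_0)$ becomes the $K$-span $K \cdot \prim(H_0)$. The claim thus reduces to the equality $K \cdot \prim(H_0) = \prim(KH_0)$ inside $KH_0$.

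To prove this, observe that $\prim(H_0)$ is the kernel of the $R$-linear map
\[
\varphi_{H_0} : H_0 \to H_0 \otimes_R H_0, \qquad h \mapsto \Delta(h) - h \otimes 1 - 1 \otimes h.
\]
Because $K$ is flat over $R$, applying $-\otimes_R K$ preserves this kernel. Using the natural identification $(H_0 \otimes_R H_0) \otimes_R K \cong KH_0 \otimes_K KH_0$, and the fact that the comultiplication on $KH_0$ is obtained by base-changing $\Delta$, one sees that $\varphi_{H_0} \otimes_R K$ is exactly the analogous map $\varphi_{KH_0}$ whose kernel computes $\prim(KH_0)$. Hence
\[
\prim(H_0) \otimes_R K \;\xrightarrow{\sim}\; \prim(KH_0).
\]

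Finally, because $H_0$ is $R$-flat, the inclusion $\prim(H_0) \hookrightarrow H_0$ remains injective after tensoring with $K$, so the isomorphism above identifies $\prim(H_0) \otimes_R K$ with its image inside $KH_0$, which is exactly $K \cdot \prim(H_0)$. Combining these yields $K \cdot \prim(H_0) = \prim(KH_0)$, as required. No real obstacle arises here: the entire argument amounts to applying flat base change to the linear-algebraic definition of primitive elements, together with the elementary observation that Dieudonn\'e modules in both categories are realized concretely as $\prim(-)$.
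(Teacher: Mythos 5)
Your proof is correct, and it actually supplies the one step the paper leaves implicit. The paper's own argument picks an $R$-basis $\{t_1,\dots,t_n\}$ of $\prim(H_0)$, records the matrix $A_0$ of the Frobenius action, and then simply asserts that this set ``is a $K$-basis for $\prim(KH_0)$'' before concluding $M = KM_0$ — but that assertion is precisely the content of the lemma. Your route proves it: you realize $\prim(-)$ as the kernel of the linear map $h \mapsto \Delta(h) - h\otimes 1 - 1\otimes h$, use flatness of $K$ over $R$ to commute this kernel with $-\otimes_R K$, identify $(H_0\otimes_R H_0)\otimes_R K$ with $KH_0\otimes_K KH_0$, and use $R$-flatness of $H_0$ to pin down the image as $K\cdot\prim(H_0)$ inside $KH_0$. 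What the paper's basis-and-matrix computation buys is an explicit description that feeds directly into the later linear-algebra machinery (the matrices $A_0$ and $\Theta$); what your argument buys is an actual proof of the base-change compatibility, valid verbatim for any flat extension (as the paper's subsequent remark claims without proof). The only point you leave tacit is that the equality is one of $K[F]$-modules, not just $K$-modules; this is harmless, since the $F$-action on both sides is the restriction of the $p$-th power map on $KH_0$, so it agrees automatically once the underlying subsets coincide.
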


Of course, this result could have been described in terms of the contravariant Dieudonn\'e module functor.

\begin{proof}
Pick an $R$-basis $\{t_1,\dots,t_n\}$ for $\prim(H_0)$. Since $\sigma(t_i)\in \prim(H_0)$ for all $i$ we can write
\[\sigma(t_i)=\sum_{j=1}^n a_{j,i} t_j \]
for some $a_{j,i} \in R.$ Let $A_0=(a_{i,j})\in M_n(R)$. Then $M_0=D_{*,R}(H_0)$ is $R$-free of rank $n$, and we can write $M_0=\oplus_{i=1}^n Re_i$,  such that $Fe_i = A_0 e_i$. 
Now let $H=KH_0$. Since $\{t_1,\dots,t_n\}$ is a $K$-basis for $\prim(H)$ and the value of $\sigma(t_i)$ does not change, $M:=D_{*,K}(H)$ is a rank $n$ module over $K$ with $Fe_i = A_0 e_i$. Therefore, $M=KM_0$.
\end{proof}

\begin{remark}
In the proof of this result, all that was used was that $R$ is contained in $K$. More generally, both $D_{*}$ and $D^*$ behave very well under base change.
\end{remark}

When necessary to indicate the base ring, we will continue to use the notations $D_{*,R}$ and $D_{*,K}$.

The next result is the analogue of \cite[2.4.7]{Kisin07} mentioned in the introduction, as it indicates when two $R$-Hopf algebras are generically isomorphic.

\begin{lemma} \label{invert}
Let $H_1$ and $H_2$ be primitively generated $R$-Hopf algebras. Let $M_1,M_2$ be the corresponding Dieudonn\'e modules, and suppose $f:M_1\to M_2$ is an $R[F]$-module map. If the induced $K[F]$-module map $KM_1 \to KM_2$ is an isomorphism, then $D_*(f)(H_1)$ and $H_2$ are Hopf orders in $KH_2$.
\end{lemma}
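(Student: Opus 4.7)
The plan is to transport the Dieudonn\'{e} module map $f$ to a Hopf algebra map via the equivalence $D_{*,R}\colon \mathrm{PG}_R \to \mathrm{FF}_R$ and then show its image has the required properties. Let $\phi\colon H_1 \to H_2$ denote the unique morphism of $R$-Hopf algebras satisfying $D_{*,R}(\phi)=f$; this is what the statement abbreviates as $D_*(f)$. The task reduces to showing that $\phi(H_1)$ is a finitely generated projective $R$-sub-Hopf algebra of $H_2$ with $K\cdot\phi(H_1)=KH_2$, since $H_2$ is tautologically a Hopf order in $KH_2$.

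The key input is a morphism-level extension of the previous lemma. Since $D_{*,R}(H)=\prim(H)$ is functorial and the identification $K\cdot D_{*,R}(H_0)=D_{*,K}(KH_0)$ is natural in $H_0$, one sees that $D_{*,K}(K\phi)$ coincides with $Kf$ under this identification. Because $D_{*,K}$ is itself a category equivalence and $Kf$ is assumed to be an isomorphism, $K\phi\colon KH_1\to KH_2$ is therefore an isomorphism of $K$-Hopf algebras.

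From here the remainder of the argument is routine. Since $H_1$ is $R$-projective it is $R$-flat, hence embeds into $KH_1$; composing with the isomorphism $K\phi$ forces $\phi\colon H_1\to H_2$ to be injective. Consequently $\phi(H_1)$ is an $R$-sub-Hopf algebra of $H_2$, isomorphic to $H_1$ as an $R$-module, so in particular finitely generated and projective. By $K$-linearity and surjectivity of $K\phi$ one has $K\cdot\phi(H_1)=K\phi(KH_1)=KH_2$, which finishes the proof.

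The main technical point is the morphism-level naturality step, but this is essentially automatic because $D_*$ is realized as $\prim(-)$ and primitive elements of a flat Hopf algebra are preserved by the base change $-\otimes_R K$; everything else is a formal consequence of the two category equivalences combined with the flatness of $H_1$.
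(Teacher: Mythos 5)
Your proof is correct and follows essentially the same route as the paper: transport $f$ to a Hopf algebra map via the equivalence, use the base-change compatibility $KD_{*,R}(H_0)=D_{*,K}(KH_0)$ to see that the generic fiber of that map is an isomorphism, and conclude that the (injective) map realizes $H_1$ as a Hopf order in $KH_2$. You simply spell out more explicitly the verification that the image is finitely generated, projective, and spans $KH_2$ over $K$, which the paper leaves implicit.
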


Loosely, we say that $H_1$ and $H_2$ are orders in the same Hopf algebra. However, strictly speaking this is not necessarily the case: we need to first embed $H_1$ into $H_2$, which we do here via $D_*(f)$.

\begin{proof}
If $KM_1 \to KM_2$ is an isomorphism then $f$ is necessarily injective, hence so is $D_*(f)$. Since
\[KD_{*,R}(H_1)=D_{*,K}(KH_1))\cong D_{*,K}(D_*(f)(KH_1))=D_{*,K}(KH_2))=KD_{*,R}(H_2),\]
it follows that $H_1$ and $H_2$ are generically isomorphic. The map $D_*(f)$ gives the embedding $H_1\hookrightarrow H_2\subset KH_2$.
\end{proof}

\begin{remark}
The lemma above has a converse: if $KH_1\cong KH_2$ then there exists an $R[F]$-module map $f:M_1\to M_2$, with $M_i=D_{*,R}(H_i)$. To see this, pick $R$-bases $\{t_1,\dots,t_n\}$ and $\{u_1,\dots,u_n\}$ for $H_1$ and $H_2$ respectively. Then these sets are also $K$-bases for $KH_1$ and $KH_2$ respectively. Let $\phi: KM_1\to KM_2$ be an isomorphism of $K[F]$-modules (which we know exists since the $K$- Hopf algebras are isomorphic). Then there exist $c_{j,i}\in K$ such that
\[\phi(t_i)=\sum_{j=1}^n c_{j,i}u_j,\; 1\le i \le n.\]
Let $m=\min\{v(c_{j,i}):1\le i,j\le n\}$, and define $\psi:KM_1\to KM_2$ by
\[\psi(t_i)=\sum_{j=1}^n \pa^{-m} c_{j,i}u_j,\; 1\le i \le n.\]
Then $\psi$ restricts to a an injective map $f:M_1 \to M_2$.
\end{remark}

\section{A Linear Algebra Construction}

We have seen that Dieudonn\'e modules can be described using matrices. We will now see how those matrices allow us to find Hopf orders as well. In this section, $R$ is a discrete valuation ring (not necessarily complete) with uniformizing parameter $\pa$. We let $v_K$ denote the normalized valuation on $K$. 

Let us fix a primitively generated $K$-Hopf algebra $H$ of rank $p^n$, and let $A\in M_n(K)$ be the matrix associated to $H$ with respect to some primitive generating set $\{t_1,\dots,t_n\}$ of $H$. Furthermore, by replacing $\{t_1,\dots,t_n\}$ with $\{\pa^v t_1,\dots,\pa^v t_n\}$ for $v$ suitably large we may assume that $A\in M_n(R)$. Let $H_1$ be the $R$-Hopf algebra which has $A$ associated to it. Then $H_1$ is an $R$-Hopf order of $H$.

Now suppose that $H_2$ is another $R$-Hopf order in $H$. Then $H_1$ and $H_2$ are generically isomorphic, hence there is a $K[F]$-module isomorphism $\Theta: M_1 \to M_2$, where $D_{*,K}(H_i) = M_i, i=1,2$.
For $\Theta$ to be a $K[F]$-module map, it needs to be $K$-linear and respect the action of $F$. Then $\Theta(Fe_i)=F\Theta(e_i)$ for all $1\le i\le n$, where $\{e_i:1\le i\le n\}$ is a basis for $M_1$.

Write $M_2=\oplus_{i=1}^n Rf_i$, $Ff_i=\sum_{j=1}^n b_{j,i}f_j, b_{j,i}\in R$, and write $\Theta(e_i)=\sum_{j=1}^n\theta_{j,i} f_j$. Then
\begin{align*}
\Theta(Fe_i) & = \Theta(\sum_{j=1}^n a_{j,i}e_j) \\
	&= \sum_{j=1}^n a_{j,i} \Theta(e_j) \\
	&= \sum_{j=1}^n \sum_{k=1}^n a_{j,i} \theta_{k,j}f_k \\
	&=\sum_{k=1}^n \Big(\sum_{j=1}^n \theta_{k,j} a_{j,i} \Big) f_k
\end{align*}
and
\begin{align*}
F(\Theta(e_i)) & = F(\sum_{j=1}^n\theta_{j,i} f_j) \\
	&= \sum_{j=1}^n\theta_{j,i}^pF(f_j)\\
	&=\sum_{j=1}^n\sum_{k=1}^n \theta_{j,i}^p b_{k,j} f_k\\
	&=\sum_{k=1}^n\Big(\sum_{j=1}^n  b_{k,j} \theta_{j,i}^p\Big)  f_k
\end{align*}

If we denote the matrix which represents the $K$-linear map $\Th$ by $\Th$ as well, and if $B=(b_{i,j})$ then 
\begin{equation}\label{id}
\Th A = B \Thp.
\end{equation}

In fact, we have

\begin{proposition}\label{thetaprop}
Let $H_1$ and $H_2$ be primitively generated $R$-Hopf algebras of rank $p^n$. Let $A\in M_n(R)$ be the matrix associated to $H_1$, $B\in M_n(R)$ the matrix associated to $H_2$. Then $H_1$ and $H_2$ are generically isomorphic if any only if there exists a $\Th=(\theta_{i,j})\in GL_n(K)$ such that $\Th A = B \Thp$.
\end{proposition}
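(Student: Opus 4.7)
The plan is to use the Dieudonn\'e equivalence $D_{*,K}\colon\text{PG}_K\to\text{FF}_K$ to translate generic isomorphism ($KH_1\cong KH_2$) into a $K[F]$-module isomorphism $KM_1\to KM_2$, where $M_i:=D_{*,R}(H_i)$, and then to express that isomorphism as a matrix in $\GL_n(K)$. Most of the bookkeeping has already been carried out in the display immediately preceding the proposition, so the proof is essentially a matter of packaging that calculation into an if-and-only-if.

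First I would fix once and for all the $R$-bases $\{e_i\}$ of $M_1$ and $\{f_i\}$ of $M_2$ that realize $A$ and $B$ as the matrices of $F$. By the base-change lemma these are also $K$-bases of $KM_i=D_{*,K}(KH_i)$, on which $F$ still acts by $A$ and $B$. For the $(\Rightarrow)$ direction, a generic isomorphism gives $KH_1\cong KH_2$ in $\text{PG}_K$; applying $D_{*,K}$ converts this to a $K[F]$-module isomorphism $\Th\colon KM_1\to KM_2$, whose matrix in the chosen bases is some $\Th\in\GL_n(K)$. Equating coefficients of $f_k$ in the computations of $\Th(Fe_i)$ and $F(\Th(e_i))$ that appear just before equation~\eqref{id} then yields $\Th A=B\Thp$.

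For the $(\Leftarrow)$ direction, given $\Th=(\theta_{i,j})\in\GL_n(K)$ with $\Th A=B\Thp$, I would define a $K$-linear map $\widetilde{\Th}\colon KM_1\to KM_2$ by $\widetilde{\Th}(e_i)=\sum_j\theta_{j,i}f_j$. Reading the same two coefficient computations backwards, the matrix identity is exactly the statement that $\widetilde{\Th}\circ F=F\circ\widetilde{\Th}$, so $\widetilde{\Th}$ is a morphism in $\text{FF}_K$; invertibility of $\Th$ over $K$ makes it $K$-linearly bijective, hence an isomorphism in $\text{FF}_K$. Applying $D_{*,K}^{-1}$ recovers the desired isomorphism $KH_1\cong KH_2$.

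I do not expect any serious obstacle: the whole argument is a formal consequence of the $K$-level Dieudonn\'e equivalence together with a manifestly reversible calculation that already appears in the excerpt. The only care required is to keep the bases consistent when writing down $A$, $B$, and $\Th$, and to remember to base-change to $K$ (via the base-change lemma) before invoking the equivalence, since the Dieudonn\'e modules $M_i$ themselves live over $R$ and in general $M_1\not\cong M_2$ there.
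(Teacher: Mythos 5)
Your proposal is correct and follows essentially the same route as the paper: both directions are obtained by passing through the Dieudonn\'e equivalence over $K$ and reading the coefficient computation preceding equation~\eqref{id} forwards and backwards. Your write-up is simply a more explicit version of the paper's argument, with the same key ingredients (the base-change lemma and the reversibility of the matrix calculation).
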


\begin{proof}
If $H_1$ and $H_2$ are generically isomorphic, then the isomorphism $KH_1 \to KH_2$ induces a map $\Th$ on Dieudonn\'e modules which satisfies equation (\ref{id}) above. As $\Th$ is invertible, the matrix representing it must be in $\GL_n(K).$ 

Conversely, given such a $\Th$ we have an isomorphism $D_{*,K}(KH_1)\to D_{*,K}(KH_2)$.
\end{proof}

The full result is as follows.
\begin{theorem}\label{main}
Let $H$ be a primitively generated $K$-Hopf algebra, and let $B \in M_n(R)$ be the matrix associated to it. Pick $A = (a_{i,j}) \in M_n(R)$. Suppose there exists a $\Th\in \GL_n(K)$ such that  $\Th A = B \Thp$. Let
\[H_0=R[u_{1},\dots,u_{n}]/\{  u_{i}^{p}-\sum_{j=1}^{n}a_{j,i}u_{j}\}
,\;\Delta(u_{i})=u_{1}\otimes1+1\otimes u_{i}.
\]
Then $H_0$ can be embedded in $H$ as an $R$-Hopf order. Furthermore:
\begin{enumerate}
\item $H_0$ embeds in $H$ via $\Th$, i.e.
\[H_0 = R \left[ \left\{ \sum_{j=1}^{n}\theta_{j,i}t_j : 1\le i \le n \right\}\right] \subset H \]
\item If $A,A' \in M_n(R)$ are the matrices associated to the Hopf orders $H_0,H_0'$ of $H$ via the embeddings $\Th,\Th'$ respectively, then the $H_0=H_0'$ if and only if $\Th^{-1}\Th' \in M_2(R)^{\times}$.
\end{enumerate}
\end{theorem}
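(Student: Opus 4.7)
My plan is to invoke Proposition~\ref{thetaprop} in reverse: the equation $\Th A = B\Thp$ with $\Th\in\GL_n(K)$ produces a $K[F]$-module isomorphism $KM_0 \to KM_H$ whose matrix in the relevant primitive bases is $\Th$, where $M_0$ is the Dieudonn\'e module built from $A$ and $M_H = \prim(H)$. Applying $D_*$ over $K$ converts this into a $K$-Hopf algebra isomorphism $KH_0 \xrightarrow{\sim} H$ sending $u_i \mapsto \sum_j \theta_{j,i} t_j$.

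For part (1), I would verify directly that $u_i \mapsto \sum_j \theta_{j,i} t_j$ respects the defining relations $u_i^p = \sum_j a_{j,i} u_j$ of $H_0$. Using $t_j^p = \sum_k b_{k,j} t_k$, the $p$th power of the image is $\sum_k (B\Thp)_{k,i}\,t_k$, while expanding $\sum_j a_{j,i}\bigl(\sum_k \theta_{k,j} t_k\bigr)$ gives $\sum_k (\Th A)_{k,i}\,t_k$; these agree precisely by hypothesis. The images of the $u_i$ are $R$-linear combinations of primitives and hence primitive, so comultiplication is automatically preserved, giving an $R$-Hopf algebra map $H_0 \to H$. Its extension to $K$ is the isomorphism above, so the map is injective with $K$-span equal to $H$; this exhibits $H_0$ as an $R$-Hopf order with the claimed image.

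For (2), the key observation is that any $R$-Hopf order of a primitively generated $K$-Hopf algebra is itself primitively generated (as noted in Section~3 via \cite[Cor. 3.4]{WeisfeilerDolgacev74}), so $H_0 = R[\prim(H_0)]$ and $H_0' = R[\prim(H_0')]$. Hence $H_0 = H_0'$ if and only if $\prim(H_0) = \prim(H_0')$ as $R$-submodules of $\prim(H)$. Under the identification $\prim(H) \cong K^n$ via $\{t_1, \dots, t_n\}$, part (1) realizes $\prim(H_0)$ as the free $R$-module spanned by the columns of $\Th$, and likewise $\prim(H_0')$ as that spanned by the columns of $\Th'$. Equality of these lattices in $K^n$ is then equivalent to the change-of-basis matrix $\Th^{-1}\Th'$ lying in $\GL_n(R) = M_n(R)^\times$.

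The main step that will require care is the identification $\prim(H_0) = \bigoplus Ru_i$, rather than a strictly larger submodule of $\prim(H)$. This follows from $D_{*,R}(H_0) = \prim(H_0)$ together with the construction of $H_0$ from the matrix $A$: the Dieudonn\'e module built from $A$ is $R$-free on the $e_i$, and under $D_{*,R}$ these correspond to the primitive generators $u_i$ of $H_0$. Once this is in place, (2) reduces to the standard characterization of equal full-rank $R$-lattices in $K^n$ by invertibility of their change-of-basis matrix.
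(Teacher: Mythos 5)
Your proposal is correct, and for the embedding statement and part (1) it follows essentially the same route as the paper: invoke Proposition~\ref{thetaprop} to get the $K[F]$-module isomorphism with matrix $\Th$, transport it through $D_*$ to the Hopf algebra isomorphism $u_i \mapsto \sum_j \theta_{j,i}t_j$, and restrict to $H_0$. Your added direct check that $\Th A = B\Thp$ is exactly the condition for this assignment to respect the relations $u_i^p = \sum_j a_{j,i}u_j$ is not in the paper but is a worthwhile sanity check (one small slip: the images of the $u_i$ are $K$-linear, not $R$-linear, combinations of the $t_j$, which is still enough for primitivity). Where you genuinely diverge is part (2): the paper only argues the ``if'' direction, by observing that $\Th' = \Th U$ with $U \in M_n(R)^{\times}$ induces an invertible change of coordinates $H_0' \to H_0$, and leaves the converse implicit. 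You instead identify $\prim(H_0)$ and $\prim(H_0')$ with the $R$-lattices spanned by the columns of $\Th$ and $\Th'$ inside $\prim(H) \cong K^n$, use the fact (established in Section~3 via \cite[Cor.~3.4]{WeisfeilerDolgacev74}) that a Hopf order in a primitively generated Hopf algebra is generated by its primitives to reduce $H_0 = H_0'$ to equality of these lattices, and then apply the standard criterion that two full-rank lattices coincide exactly when the change-of-basis matrix lies in $\GL_n(R)$. This buys you both implications in one stroke and in particular supplies the ``only if'' direction that the paper's proof glosses over; the one point you rightly flag as needing care, namely $\prim(H_0) = \bigoplus_i Ru_i$ rather than something larger, is exactly the content of $D_{*,R}(H_0) = \prim(H_0)$ together with the freeness of the Dieudonn\'e module built from $A$, so your argument is complete within the paper's framework.
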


\begin{proof}
That $KH_0 \cong H$ follows from Proposition \ref{thetaprop}, and is given on the Dieudonn\'e modules by $\Th$. Of course, the Dieudonn\'e modules are the primitive elements of the $K$-Hopf algebras. Since each Hopf algebra is generated (as an algebra) by its primitive elements, the Hopf algebra isomorphism $KH_0 \to H$ above can be completely described by the images of the $u_i$, which are given by $\Th$, explicitly:
\[u_i \mapsto \sum_{j=1}^{n}\theta_{j,i}t_j.\]
 By restricting the domain to $H_0$ we get the  embedding as described in (1). 

For (2), if $\Th^{-1}\Th' \in M_2(R)^{\times}$, then $\Th' = \Th U$ for some matrix $U$ invertible in $R$. The embeddings of $H_0$ and $H_0'$ in $H$ are

\[H_0 = R[\{ \sum_{j=1}^{n}\theta_{j,i}t_j : 1\le i \le n \}] \text{ and }  
H_0' = R [\{ \sum_{j=1}^{n}\theta_{j,i}'t_j : 1\le i \le n \}],\]
and $U$ induces an invertible change of coordinates map $H_0'\to H_0$.
\end{proof}

We expand upon an observation from the proof of (2) above and restate it, since it will be very useful in Section {\ref{exes}}.

\begin{corollary}\label{maincor}
With the notation above $A$ and $A'$ are associated to the same Hopf order if and only if $\Th' = \Th U$ for some matrix $U$ invertible in $R$. 
\end{corollary}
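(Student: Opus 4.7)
The plan is to observe that Corollary \ref{maincor} is essentially a repackaging of Theorem \ref{main}(2). Indeed, setting $U = \Th^{-1}\Th'$ shows that the condition ``$\Th^{-1}\Th' \in \GL_n(R)$'' of Theorem \ref{main}(2) is literally equivalent to ``$\Th' = \Th U$ for some $U$ invertible in $R$''. So all the content has already been established and the work reduces to unwinding this trivial equivalence of conditions. To keep the statement self-contained, though, I would sketch both implications directly.

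For the ``if'' direction, suppose $\Th' = \Th U$ with $U = (u_{k,i}) \in \GL_n(R)$. Then the generators of $H_0'$ provided by Theorem \ref{main}(1) expand as
\[\sum_{j=1}^n \theta'_{j,i} t_j = \sum_{k=1}^n u_{k,i}\Big(\sum_{j=1}^n \theta_{j,k} t_j\Big),\]
exhibiting each generator of $H_0'$ as an $R$-linear combination of generators of $H_0$. Applying the same argument with $U^{-1}$ in place of $U$ expresses each generator of $H_0$ in terms of those of $H_0'$, so $H_0 = H_0'$ as $R$-subalgebras of $H$.

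For the ``only if'' direction, assume $H_0 = H_0'$. Then $\{\sum_j \theta_{j,i} t_j\}_i$ and $\{\sum_j \theta'_{j,i} t_j\}_i$ are both primitive generating sets of this common $R$-Hopf order of rank $p^n$. The Dieudonn\'e correspondence from Section~2 tells us that $\prim(H_0)$ is $R$-free of rank $n$, and by the construction in Theorem \ref{main}(1) each of the two sets above is actually an $R$-basis of $\prim(H_0)$ (not merely a spanning set), because in the presentation $R[u_1,\dots,u_n]/\{u_i^p - \sum a_{j,i} u_j\}$ the primitives are spanned freely by $u_1,\dots,u_n$. The change-of-basis matrix between the two bases is precisely $\Th^{-1}\Th'$, which must therefore lie in $\GL_n(R)$; setting $U = \Th^{-1}\Th'$ yields $\Th' = \Th U$ as required.

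The only mild subtlety in the argument is the claim that each generating set is in fact an $R$-basis of $\prim(H_0)$, rather than just a spanning set; this uses the anti-equivalence between primitively generated Hopf algebras and finite $R[F]$-modules free over $R$ developed in Section~2, and so presents no real obstacle.
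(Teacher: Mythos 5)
Your proof is correct, but it follows a genuinely different route from the paper's. The paper's own proof of Corollary \ref{maincor} treats the ``if'' direction as already done in Theorem \ref{main}(2) and devotes itself to a closure statement: for a valid $\Th$ and an \emph{arbitrary} $U\in M_n(R)^{\times}$, the matrix $\Th U$ is again a valid embedding, via the conjugation computation $(\Th U)^{-1}B(\Th U)^{(p)}=U^{-1}(\Th^{-1}B\Thp)U^{(p)}\in M_n(R)$; this closure is what is invoked later when $\Th$ is repeatedly replaced by $\Th U$ in the reduction to DDL form. You instead prove both directions of the stated equivalence directly: the ``if'' direction by the same change-of-generators manipulation as in Theorem \ref{main}(2), and the ``only if'' direction --- which the paper never spells out, here or in Theorem \ref{main}(2) --- by noting that the two generating sets are both $R$-bases of the free rank-$n$ module $\prim(H_0)$, so their transition matrix $\Th^{-1}\Th'$ lies in $\GL_n(R)$. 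That basis argument is sound given the paper's identification of $D_{*,R}(H_0)$ with $\prim(H_0)$, and it supplies a direction the paper leaves implicit. What your write-up omits, relative to the paper, is the verification that $\Th U$ stays valid for an arbitrary unit $U$ (your ``if'' direction tacitly assumes $H_0'$ is already a Hopf order with $A'\in M_n(R)$, which is legitimate under the corollary's stated hypotheses); if you want the corollary to be usable as the paper uses it later, add the one-line conjugation computation above.
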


\begin{proof}
We have seen that if $\Th' = \Th U$ then the same Hopf order is produced. Now suppose that we have an $A$ and $\Th$ which give a Hopf order, and we pick $U$ to be any element of $M_n(R)^{\times}$. Set $\Th' = \Th U$. The proof will be complete provided that we can show $(\Th')^{-1}(\Th')^{(p)} \in M_n(R).$ Since $U^{-1},(\Th^{-1}\Thp)$, and $U^{(p)} \in M_n(R)$,
\[ (\Th')^{-1}(\Th')^{(p)} = (\Th U)^{-1} (\Thp U^{(p)}) = U^{-1}( \Th^{-1}\Thp) U^{(p)}\in M_n(R). \]
\end{proof}

\subsection{The significance of $A$} In the construction above, the matrix $\Th$ gives the embedding. As we will see below, we will construct Hopf orders by picking $\Th$ first, then letting $A=\Th^{-1} B \Thp$; if $A$ has its entries in $R$ then we will have constructed a Hopf order. From this perspective, the entries of $A$, aside from whether or not they are in $R$, appear irrelevant. However, $A$ carries important information about the Hopf order as an $R$-Hopf algebra, and can be used to quickly obtain certain interesting characteristics of $A$.

For example, let $H$ be the Hopf algebra
\[H = K[t_1,t_2,t_3]/(t_1^p-\pa^3 t_2, t_2^p - \pa^2 t_1, t_3^p-\pa^4t_3). \]
Let 
\[ \Th = \left( \begin{array}{c c c}\pa & 0 & 0 \\ 1 & 1 & 0 \\ 1 & 0 & \pa \end{array} \right) . \]
Then
\[ A = \Th^{-1} \left( \begin{array}{c c c}0 & \pa^2 & 0 \\ \pa^3 & 0 & 0 \\ 0 & 0 & \pa^4 \end{array} \right) \Thp =  
\left( \begin{array}{c c c}\pa & \pa & 0 \\ \pa^{p+3} - \pa & -\pa & 0 \\ \pa^3-1 & -1 & \pa^{p+3} \end{array} \right) , \]
giving the Hopf order
\[ H_0:=R[\pa t_1 + t_2 + t_3, t_2 ,\pa t_3] \subset H.\]

As an $R$-Hopf algebra, we have
\[H_0 = R[u_1,u_2,u_3]/(u_1^p - \pa u_1 - (\pa^{p+3}-\pa)u_2 - (\pa^3 - 1)u_3, u_2^p - \pa u_1 +\pa u_2 +u_3, u_3^p - \pa^{p+3} u_3).\]

Using the geometric language of models, the additive $R$-group scheme $G_0=\Sp(H_0)$, explicitly given by
\[G_0(S) = \{ (s_1,s_2,s_3) \in S^3: s_1^p = \pa s_1 + (\pa^{p+3}-\pa)s_2 + (\pa^3 - 1)s_3,s_2^p=\pa s_1 -\pa s_2 -s_3,s_3^p + \pa^{p+3} s_3 \} \]
for $S$ an $R$-algebra, is a model of $G=\Sp(H)$.

Furthermore, since the theory of Dieudonn\'e modules is fully compatible with the theory over perfect fields, the matrix $A$, when reduced mod $\pa$ gives the algebra structure of $H_0\otimes_R \Fq$, where $\Fq$ is the residue field of $R$. Thus,
\[H_0\otimes_R \Fq \cong \Fq[u_1,u_2,u_3]/(u_1^p+u_3,u_2^p+u_3,u_3^p)\cong \Fq[u_1,y]/(u_1^{p^2},y^p) \]
where $y=u_1-u_2$.  Geometrically, we say that $\Sp (H_0)$ has special fibre isomorphic to $\alf{p^2}\times\alf{p}$; algebraically, we see that $H_0$ is a local-local Hopf algebra, that is, a local $R$-algebra with local dual. Generally, $\Th \bmod \pa$ gives us, e.g., a minimal set of algebra generators of $H_0\otimes_R \Fq$, hence of $H_0$ by Nakayama's Lemma.

\section{Rank $p$ Hopf Orders}

We illustrate how our technique can be used to find Hopf orders in rank $p$ Hopf algebras. While identifying these Hopf orders is nothing new thanks to \cite{TateOort70}, our desire here is to show how the results can be obtained quickly using Dieudonn\'e modules.
Of course, in the rank $p$ case, $n=1$, so our ``matrices" are simply elements of either $K$ or $R$.

We will continue to allow $R$ to be any discrete valuation ring of characteristic $p$, however in the case where $R$ is complete we obtain a nicer description of the Hopf orders.

Let us first describe all rank one Dieudonn\'e modules over $K$. They are all of the form $M_b = Ke$ with $Fe=be$ for some $b\in K$. Also, $M_a \cong M_b$ if and only if there is a $\theta \in K^{\times}$ such that $\theta a = b\theta^p$. We split our study into two natural pieces.

\subsection{Local case} Let $b=0$. Then the corresponding Hopf algebra is $H=K[t]/(t^p)$, i.e., $\Sp(H)=\alf{p}$, and is the unique monogenic local Hopf algebra with local dual of rank $p$. Clearly, if $\theta a = b\theta^p$ then $a=0$ as well, so there is a single choice of $b$ which produces this $K$-Hopf algebra. However, if $a=b=0$ then {\it any} nonzero $\theta$ satisfies the equality above. Each choice of $\theta$ produces a (potentially) different Hopf order, despite the fact that the orders are all isomorphic as Hopf algebras. The Hopf orders we construct are of the form
\[H_{\theta} = R[\theta t],\]
and $H_{\theta} = H_{\theta'}$ if and only if $\theta '/\theta \in R^{\times}$. In particular, if $R$ is complete, then the Hopf algebras are parameterized by the valuations of $\theta$, hence the $R$-Hopf orders in the complete case are
\[H_i = R[\pa^i t], i\in \mathbb{Z}.\]

\subsection{Separable case} Now we suppose $b\ne 0$. Then $H=K[t]/(t^p-bt)$ is separable. Let $\theta\in  K^{\times}$ and let $a=b\theta^{p-1}$. Then $M_a \cong M_b$, so the isomorphism classes of separable $K$-Hopf algebras of rank $p$ are parameterized by $K^{\times}/(K^{\times})^p$ (as is well-known); furthermore, $H\otimes_K K^{p^{-\infty}} \cong K^{p^{-\infty}}C_p^*$, where $K^{p^{-\infty}}$ is the separable closure of $K$. Thus, $H$ is a form of $KC_p^{\ast}$; in the language of schemes, $\Sp H$ is geometrically isomorphic to $\Zp$.

Let us find the Hopf orders in $H=K[t]/(t^p-bt)$. By replacing $t$ with $\pa^vt$ for some suitably chosen $v$ assume $0\le v_K(b) \le p-2$. Pick $\theta \in K^{\times}$ such that $a=b\theta^{p-1}\in R$. Since  $v_K(a) = v_K(b)+(p-1)v_K(\theta)$ we see that $v_K(a) \ge 0$ if and only if $v_K(\theta) \ge 0$. Thus, the Hopf orders are of the form
\[H_{\theta} = R[\theta t],\; \theta \in R\]
and as before $H_{\theta} = H_{\theta'}$ if and only if $\theta '/\theta \in R^{\times}$. Thus, if $R$ is complete, then the Hopf orders are
\[H_i=R[\pa^i t],\;i\ge 0.\]

Notice that this is in stark contrast to the Hopf orders in the cyclic group ring $KC_p$: orders in $KC_p:=K\gen{g}$ are of the form $R[\pa^{-i}t],\;i \ge 0$, where $t=g-1$. This contrast is to be expected: for example, \cite[5.2]{Childs00} shows that $RC_p$ is the unique minimal Hopf order in $KC_p$ by showing that $RC_p^*$ is the maximal Hopf order in $KC_p^*$ and applying a duality argument. 

\section{Higher Rank Hopf Orders}\label{exes}

We now turn to the more ambitious task of finding Hopf orders in rank $p^n$ Hopf algebras. In this section, we assume that $R$ is complete.

\subsection{Reducing the choices of $\Th$}

It turns out that we can assume a nice form for the matrix $\Th$ which gives the embedding.

\begin{proposition}

Let $H$ be a primitively generated $K$-Hopf algebra of rank $p^n$, and let $B$ be the matrix associated to $H$. Let $H_0$ be an $R$-Hopf order in $H$, and let $A$ be the matrix associated to $H_0$. Then there exists $\Theta=(\theta_{i,j}) \in GL_n(K)$ such that
\begin{enumerate}
\item $\Th$ is lower triangular,
\item $v_K(\theta_{i,i})\ge v_K(\theta_{i,j}),\; j \le i \le n$,
\item $\theta_{i,i}=\pa^{-v_K(\theta_{i,i})}$,
\item $\Th A = B\Thp$.
\end{enumerate}
\end{proposition}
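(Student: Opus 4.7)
The plan is to start from any $\Th_0 \in \GL_n(K)$ satisfying $\Th_0 A = B \Th_0^{(p)}$ (which exists by Proposition \ref{thetaprop}, since $H_0$ is an $R$-Hopf order in $H$) and to modify it by right-multiplication with elements of $\GL_n(R)$. By Corollary \ref{maincor}, every such modification leaves the Hopf order $H_0$ unchanged and, in particular, preserves condition (4); hence the task reduces to producing conditions (1)--(3) via right-multiplications alone.

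For (1), consider the $R$-lattice $\Lambda = \Th_0 R^n \subset K^n$, where we identify $K^n$ with $\prim(H)$ via the basis $\{t_1, \ldots, t_n\}$. Filter $\Lambda$ by $\Lambda^{(k)} = \Lambda \cap (Kt_k \oplus \cdots \oplus Kt_n)$; each $\Lambda^{(k)}$ is a free $R$-module of rank $n-k+1$, and each quotient $\Lambda^{(k)}/\Lambda^{(k+1)}$ embeds as a rank-one $R$-submodule of $K t_k$. Picking $w_k \in \Lambda^{(k)}$ that projects to a generator of this quotient yields an $R$-basis of $\Lambda$ with $w_k \in \mathrm{span}_K(t_k, \ldots, t_n)$. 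The matrix $L$ expressing the $w_i$ in the $t_j$-basis is then lower triangular, and since $L R^n = \Th_0 R^n$ there is $V \in \GL_n(R)$ with $L = \Th_0 V$; replacing $\Th_0$ by $L$ gives (1). The diagonal valuations $v_i := v_K(\theta_{i,i})$ are intrinsic invariants of $\Lambda$ with respect to this flag, so they survive any further right-multiplication by $\GL_n(R)$.

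For (2), process rows from the top: for $i = 2, 3, \ldots, n$ and each $j < i$ with $v_K(\theta_{i,j}) > v_i$, right-multiply by the elementary matrix $I + c E_{i,j} \in \GL_n(R)$ (where $E_{i,j}$ has a $1$ in position $(i,j)$) with $c \in R^\times$ chosen so that $\theta_{i,j} + c\theta_{i,i}$ has valuation exactly $v_i$. Such $c$ works because $c\theta_{i,i}$ has valuation $v_i$, strictly less than $v_K(\theta_{i,j})$, so the minimum dominates. Since column $i$ of the working matrix has zeros above row $i$, this column operation alters no entry in rows $1, \ldots, i-1$, so previously processed rows are preserved; lower-triangularity also persists since $i>j$. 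For (3), right-multiply by $\mathrm{diag}(\pa^{v_i}/\theta_{i,i})_{i=1}^n \in \GL_n(R)$ (each entry is a unit because its valuation is $0$), so that diagonal entries become pure powers of $\pa$ while off-diagonal entries are merely rescaled by units, preserving (1) and (2). (We read condition (3) as stating that $\theta_{i,i}$ is a pure power of $\pa$ matching its valuation; the sign in the displayed formula appears to be a typographical slip.) The main obstacle is the lower-triangularization step --- essentially the Iwasawa decomposition $\GL_n(K) = B^-(K) \cdot \GL_n(R)$ --- together with the careful row-ordering that keeps the successive elementary column operations from interfering with one another.
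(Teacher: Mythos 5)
Your proof is correct, and its overall skeleton is the same as the paper's: start from any $\Th_0$ with $\Th_0 A = B\Th_0^{(p)}$ and repeatedly right-multiply by elements of $M_n(R)^{\times}$, which by Corollary \ref{maincor} changes neither the Hopf order nor membership of $\Th^{-1}B\Thp$ in $M_n(R)$ (it replaces $A$ by $U^{-1}AU^{(p)}$, i.e.\ the matrix associated to $H_0$ for a new choice of primitive generators --- the same implicit move the paper makes). Where you genuinely diverge is the triangularization step: the paper proceeds by explicit Gaussian-elimination-style column operations, swapping a minimal-valuation entry of each row into the diagonal position and then clearing the entries to its right (the unit $-\theta_{k,j}/\theta_{k,k}$ lies in $R$ precisely because of the minimality of $v_K(\theta_{k,k})$), whereas you invoke the Iwasawa-type decomposition via the lattice filtration $\Lambda^{(k)}=\Lambda\cap(Kt_k\oplus\cdots\oplus Kt_n)$ and a splitting basis $\{w_k\}$. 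Both rest on the same fact about lattices over a DVR; the paper's version is more constructive, while yours is cleaner, avoids the bookkeeping of column swaps, and has the added benefit of exhibiting the diagonal valuations $v_i$ as intrinsic invariants of the lattice relative to the flag, hence independent of all later choices. Your steps (2) and (3) coincide with the paper's in substance but are stated more carefully: the paper's formula for $u'_{i,j}$ in step (2) has an index slip ($i=k<j$ where $i=k>j$ is meant), and its step (3) uses $u_{i,i}=\pa^{-v_K(\theta_{i,i})}\theta_{i,i}$ where $\pa^{v_K(\theta_{i,i})}\theta_{i,i}^{-1}$ is needed; your reading of condition (3) as ``$\theta_{i,i}$ is the power of $\pa$ matching its valuation'' (so that the exponent sign in the statement is a typo) is the correct one, consistent with the DDL form $\Th=\mx{\pa^i & 0\\ \theta & \pa^j}$ used in the rest of the paper.
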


\begin{proof}
Of course, by Theorem \ref{main} we know that there exists a $\Th$ such that (4) holds. However, Corollary \ref{maincor} shows that the choice of $\Th$ is not unique: we may replace $\Th$ by $\Th U$ for some $U\in M_2(R)^{\times}$. We will make a strategic choice of $U$ which makes (1) through (3) hold above. The matrix $U$ will be constructed in several steps.

First, we wish to order the columns so that $v_K(\theta_{1,i}) \le v_K(\theta_{1,j})$ for all $1\le  j \le n$.  Pick $1\le m \le n$ such that $v_K(\theta_{1,m})   \le v_K(\theta_{1,j})$ for all $1 \le j \le n$. Let $U$ be the elementary matrix obtained from the identity matrix by swapping the first and $m^{\text{th}}$ columns. Then $\Th U$ is computed by swapping the same two columns in $\Th$.

Next, we wish to make $\theta_{1,j}=0$ for all $j>1$. This is accomplished as follows. Define
\[ u_{i,j}' = \left\{\begin{array}{c c} -\theta_{1,j}/\theta_{1,1} & i=1<j  \\ 0 & \text{otherwise} \end{array} \right. \]
and let $U=I+(u_{i,j}')$.  Note that $v_K(u_{i,j}') \ge 0$, so $U \in M_n(R)^{\times}$. Then $\Th U$ is a new matrix $\Th$ after the appropriate column operations have been done to zero out all entries to the right of $\theta_{1,1}$. 

Now suppose that for each $i<k\le n$ we have $\theta_{i,j}=0$ for all $j>i$.  Pick $k \le m \le n$ such that $v_K(\theta_{k,m})   \le v_K(\theta_{k,j})$ for all $k \le j \le n$. Let $U$ be the elementary matrix obtained from the identity matrix by swapping the $k^{\text{th}}$ and $m^{\text{th}}$ columns, and replace $\Th$ with $\Th U$. Then let 
\[ u_{i,j}' = \left\{\begin{array}{c c} -\theta_{k,j}/\theta_{k,k} & i=k<j  \\ 0 & \text{otherwise} \end{array} \right. \]
and let $U=I+(u_{i,j}')$. Notice that $\Th$ and $\Th U$ agree on the first $k-1$ rows.  Replacing $\Th$ by $\Th U$ results in a matrix with zeros in the $k^{\text{th}}$ row to the right of $\theta_{k,k}$. Thus $\Th$ is now lower triangular, so (1) has been shown to hold.

To obtain (2), we again work inductively on the rows. The first row clearly has this property; we now suppose that  $v_K(\theta_{i,i})\ge v_K(\theta_{i,j}),\; j \le i < k \le n$. Let
\[ u_{i,j}' = \left\{\begin{array}{c c} 1 & i=k<j, \;  v_K(\theta_{k,j}) > v_K(\theta_{k,k}) \\ 0 & \text{otherwise} \end{array} \right. \]
and let $U=I+(u_{i,j}')$. In other words $U$ is a lower triangular matrix with $1$'s on the diagonal and $1$'s where the valuation of the corresponding entry in $\Th$ exceeds the valuation of the diagonal element on its row. Again, $\Th$ and $\Th U$ agree on the first $k-1$ rows. Also, the $(k,j)^{\text{th}}$ entry of this product is
\[ \left\{ \begin{array}{c c}\theta_{k,k}+\sum \theta_{k,\ell} &  k<j, \;  v_K(\theta_{k,j}) > v_K(\theta_{k,k}) \\ \theta_{k,j} & \text{ otherwise } \end{array} \right.\]
where the sum is over all $\ell$ such that $v_K(\theta_{k,\ell}) > v_K(\theta_{k,k})$. This clearly does not change the valuation if $v_K(\theta_{k,j})<v_K(\theta_{k,k})$; for the other terms the valuation is now the same as $v_K(\theta_{k,k})$. This shows that (2) holds. Since $U$ is lower triangular, so is $\Th U$ hence (1) is preserved. 

For (3), simply replace $\Th$ with $\Th U$, where $U=(u_{i,j})$ is a diagonal matrix with 
\[u_{i,i} = \pa^{-v_K(\theta_{i,i})}\theta_{i,i} \in R^{\times}.\]
Note that this change of $\Th$ does not affect the valuation of the entries, so (1) and (2) still hold.
\end{proof}
\begin{definition}
We will call a matrix $\Th \in \GL_2(K)$ satisfying (1) - (3) above a {\it diagonal dominant lower triangular} matrix, or DDL matrix for short.
\end{definition}

\begin{remark}

Under this construction, $\Th$ cannot be a diagonal matrix. A consequence of this is that none of the Hopf orders we construct will be presented in a ``diagonal" form such as
\begin{equation}\label{Larson}
R[\pa^{i_1}t_1,\dots, \pa^{i_n}t_n].
\end{equation}

We proceed this way for two reasons. First, it is much simpler to consider one family of matrices (DDL matrices) than two (``diagonal strictly dominant lower triangular" and ``diagonal"). Second, orders presented in the form (\ref{Larson}) look very much like Larson's constructions in \cite{Larson76}, where he finds some orders in group rings. We do not wish to present a parallel class of Hopf orders here since we feel that to do so would be artificial. There is nothing inherently interesting about ``Larson-like" Hopf orders here since they depend on a choice of basis for $H$: by replacing parameters $t_1,\dots, t_n$ with another set $u_1,\dots, u_n$ can make non-Larson-like Hopf orders Larson-like and vice versa. In \cite{Larson76}, the analogue of the $t_i$ -- namely, $g_i - 1$ where the $g_i$ generate the group -- is fixed, and certainly is a distinguished basis in the group ring. In our theory, there is no such distinguished basis. 

In fact, we do get the Hopf orders of the form  (\ref{Larson}), they are simply presented differently, for example 
\[R[t_1,t_2] = R[t_1+t_2,t_2].\]

\end{remark}

The general strategy we will employ to find Hopf orders is as follows. Pick a DDL matrix $\Th$ and let $A=\Th^{-1}B\Thp$. Then a Hopf order is constructed if and only if $A \in M_n(R)$. 

\subsection {The case $n=2$} For Hopf orders of rank $p^2$, we can be more explicit. Let us write
\begin{equation}\label{form22}
\Th = \mx{\pa^i & 0 \\ \theta & \pa^j},\;v_K(\theta)\le j.
\end{equation}

As is to be expected, different DDL matrices do not necessarily produce different Hopf orders. The following result makes it easy to identify isomorphism classes.

\begin{proposition}
Let 
\[\Th = \mx{\pa^i & 0 \\ \theta & \pa^j},\;\Th' = \mx{\pa^{i'} & 0 \\ \theta' & \pa^{j'}},\; v_K(\theta)\le j,v_K(\theta')\le j'.\]
Then $\Th$ and $\Th'$ give the same Hopf order if and only if $i=i',\;j=j'$, and \[v_K(\theta-\theta') \ge j.\]
\end{proposition}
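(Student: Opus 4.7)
The natural tool is Corollary \ref{maincor}: the two matrices $\Th$ and $\Th'$ yield the same Hopf order if and only if $U := \Th^{-1}\Th'$ lies in $M_2(R)^{\times}$, i.e.\ all entries of $U$ lie in $R$ and $\det U \in R^{\times}$. The plan is therefore to compute $U$ directly from the explicit form of $\Th,\Th'$ and read off exactly when it is a unit in $M_2(R)$.

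A short computation gives
\[
\Th^{-1} = \mx{\pa^{-i} & 0 \\ -\pa^{-i-j}\theta & \pa^{-j}},
\qquad
U = \Th^{-1}\Th' = \mx{\pa^{i'-i} & 0 \\ \pa^{-j}(\theta' - \pa^{i'-i}\theta) & \pa^{j'-j}}.
\]
The determinant is $\pa^{(i'-i)+(j'-j)}$, which is a unit in $R$ precisely when $(i'-i)+(j'-j)=0$. Requiring the diagonal entries $\pa^{i'-i}$ and $\pa^{j'-j}$ to lie in $R$ forces $i'\ge i$ and $j'\ge j$; combined with the determinant condition, this forces $i=i'$ and $j=j'$. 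Under this equality the $(2,1)$-entry collapses to $\pa^{-j}(\theta'-\theta)$, and requiring this to lie in $R$ is exactly the condition $v_K(\theta-\theta')\ge j$.

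Conversely, if $i=i'$, $j=j'$ and $v_K(\theta-\theta')\ge j$, then
\[
U = \mx{1 & 0 \\ \pa^{-j}(\theta'-\theta) & 1}
\]
has entries in $R$ and determinant $1$, so $U\in M_2(R)^{\times}$, and Corollary \ref{maincor} yields equality of the Hopf orders. Thus both implications follow from the same explicit computation.

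There is no real obstacle here; the argument is a bookkeeping exercise in triangular-matrix inversion. The only point that requires a moment of care is the interaction of the two conditions ``all entries in $R$'' and ``determinant in $R^{\times}$'': it is the combination of $i'\ge i$, $j'\ge j$, and $(i'-i)+(j'-j)=0$ that pins down $i=i'$ and $j=j'$, and I would make this explicit rather than argue each equality separately. Note that the DDL hypothesis $v_K(\theta)\le j$ and $v_K(\theta')\le j'$ plays no role in the equivalence itself; it only serves to ensure that one is comparing matrices within the normalized class produced by the preceding proposition.
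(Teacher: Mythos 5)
Your proof is correct and follows essentially the same route as the paper: invoke Corollary \ref{maincor}, compute $\Th^{-1}\Th'$ explicitly, and read off when it lies in $M_2(R)^{\times}$. Your version is in fact slightly more careful than the paper's, since you spell out how the combination of ``all entries in $R$'' (forcing $i'\ge i$, $j'\ge j$) and ``determinant a unit'' (forcing $(i'-i)+(j'-j)=0$) pins down $i=i'$ and $j=j'$, a step the paper leaves implicit.
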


\begin{proof} We know that $\Th$ and $\Th'$ give the same Hopf order if and only if $\Th^{-1}\Th'\in M_2(R)^{\times}$, so we compute:
\[\Th^{-1}\Th' = \mx{\pa^{i'-i} & 0 \\ \pa^{-j}\theta'-\pa^{i'-(i+j)}\theta & \pa^{j'-j}} \]
This matrix is an invertible matrix in $M_2(R)$ if and only if $i=i', j=j'$, and
\[ \pa^{-j}\theta'-\pa^{i-(i+j)}\theta=T^{-j}(\theta'-\theta)\in R,\]
from which the result follows.
\end{proof}

This allows for a very explicit description of the Hopf orders inside of a given rank $p^2$ Hopf algebra.

\begin{corollary}
Let $B$ be the matrix associated to a primitively generated $K$-Hopf algebra. Let $\Th$ be a DDL matrix as in equation (\ref{form22}) such that $\Th^{-1}B\Thp\in M_n(R)$. Then the Hopf order corresponding to $\Th$ is
\begin{equation}\label{niceform}
H_{i,j,\theta}= R[\pa^it_1 + \theta t_2, \pa^j t_2],
\end{equation}
and $H_{i,j,\theta}=H_{i,j,\theta'}$ if and only if $v_K(\theta-\theta') \ge j$.
\end{corollary}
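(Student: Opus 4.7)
The proof should be a straightforward unpacking of the two preceding results. The plan is to apply Theorem \ref{main}(1) to read off the generating set explicitly from the columns of $\Th$, and then invoke the preceding proposition to obtain the equality condition.

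First I would address the generating set. By Theorem \ref{main}(1), if $\Th = (\theta_{i,j})$ realizes $H_0$ as a Hopf order in $H$, then $H_0 = R\bigl[\{\sum_{j=1}^n \theta_{j,i} t_j : 1 \le i \le n\}\bigr]$. Reading the two columns of
\[\Th = \mx{\pa^i & 0 \\ \theta & \pa^j}\]
gives the generators $\pa^i t_1 + \theta t_2$ (from the first column) and $0 \cdot t_1 + \pa^j t_2 = \pa^j t_2$ (from the second column). This yields precisely the presentation $H_{i,j,\theta} = R[\pa^i t_1 + \theta t_2,\, \pa^j t_2]$.

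For the equality statement, I would invoke the preceding proposition directly. With $\Th$ and $\Th'$ both DDL matrices of the form in equation (\ref{form22}), the proposition asserts that $\Th$ and $\Th'$ give the same Hopf order if and only if $i = i'$, $j = j'$, and $v_K(\theta - \theta') \ge j$. Since in the corollary we have fixed $i$ and $j$ on both sides, the first two conditions hold automatically, and the equality $H_{i,j,\theta} = H_{i,j,\theta'}$ reduces to the single inequality $v_K(\theta - \theta') \ge j$. There is no real obstacle in this proof since all the substantive work (the characterization of when two DDL matrices produce the same order, and the explicit form of the embedding coming from $\Th$) has already been done; the corollary is essentially a repackaging of these results specialized to rank $p^2$.
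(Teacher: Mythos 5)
Your proposal is correct and matches the paper's (implicit) derivation: the corollary is stated without proof precisely because it follows by reading the generators off the columns of $\Th$ via Theorem \ref{main}(1) and then applying the preceding proposition with $i=i'$, $j=j'$ fixed. Nothing is missing.
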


Notice that $H_{i,j,\theta}$ can be simplified to a diagonal form if and only if $v_K(\theta) = j$.

\begin{remark}
One may be tempted to simply pick $i,j,\theta$ and create the expression $H_{i,j,\theta}$ in (\ref{niceform}) directly. However, that $\Th^{-1}B\Thp\in M_n(R)$ always needs to be verified. For any $i,j,\theta$, while $H_{i,j,\theta}$ is closed under comultiplication  and $KH_{i,j,\theta}=H$, the equation $\Th^{-1}B\Thp\in M_n(R)$ holds if any only if $H_{i,j,\theta}$ is finitely generated.
\end{remark}

\subsection{Some examples}
We now return to the examples introduced in section {\ref{exsec}}, the exception being Example \ref{ap}, which was considered in the previous section. We will look at each example in the rank $p^2$ case, although the first example below will also be done for general rank $p^n$.

\begin{example} [\ref{apn1}]
Let $H=K[t_1,\dots,t_n]/(t_1^p,\dots,t_n^p)$, i.e., $\Sp(H)=\alf{p}^n.$ The matrix associated to $H$ is $B=0$. Thus, $\Th A = B\Thp$ if and only if $A=0$. Therefore, each $\Th\in\GL_n(K)$ gives a Hopf order. The set of all Hopf orders in this case can be parameterized by the set of left cosets \[\GL_n(K) / M_n(R)^{\times}.\]
When $n=2$ we have $H_{i,j,\theta}= R[\pa^it_1 + \theta t_2, \pa^j t_2]$ is a Hopf order for all choices of $i,\;j$, and $\theta$. To eliminate equal Hopf orders, we can require $\theta = \pa^j$ or $\deg(\theta) < j$, at which point $v_K(\theta) \le j$ is not needed as an explicit restriction. 
\end{example}

\begin{example}[\ref{apn2}]\label{apn2too}
Let $H=K[t]/(t^{p^2})$, so $\Sp(H)=\alf{p^2}.$ Letting $t_1=t^p,\;t_2= t$ gives $H=K[t_1,t_2]/(t_1^p,t_2^p-t_1)$, and the matrix $B$ is
\[B =\mx{0 & 1 \\ 0 & 0}.\]
Let
\begin{align*}
A = \Th^{-1}B\Thp &= \mx{\pa^i & 0 \\ \theta & \pa^j}^{-1} \mx{0 & 1 \\ 0 & 0} \mx{\pa^{pi} & 0 \\ \theta^p & \pa^{pj}} \\
&=\mx{\pa^{-i}\theta^p & \pa^{pj-i} \\ -\pa^{-(i+j)}\theta^{p+1} & -\pa^{(p-1)j-i}\theta}.
\end{align*}
If $A\in M_2(R)$, then clearly $pj\ge i$. Additionally,
\begin{equation}\label{apn2val}
v_K(\theta) \ge \max\{i/p,(i+j)/(p+1), i-(p-1)j \}. 
\end{equation}
But, since $i-pj \le 0$,
\begin{align*}
i-pj &\le \frac{i-pj}p \\
i-(p-1)j &\le j+\frac{i-pj}p =\frac{i}{p},\\
\end{align*}
and
\begin{align*}
i- pj &\le 0  \\
pi- p^2j &\le 0 \\
(p+1)i- (p^2-1)j &\le i+ j\\
  i-(p-1)j&\le\frac{i+j}{p+1} ,
\end{align*}
so we can simplify the restriction on $v_K(\theta)$ significantly, obtaining
\[H_{i,j,\theta}=R[\pa^it_1+\theta t_2, \pa^j t_2] = R[\pa^it^p+\theta t, \pa^j t],\;pj \ge i,\; i-(p-1)j\le v_K(\theta)\le j.\]
If $i \le pj$ then $i-(p-1)j \le j$, so for any choice of $i,j$ with $i \le pj$ there will exist choices of $\theta$ which will produce Hopf orders

Note that $H_{i,j,\theta}$ is monogenic if and only if $v_K(\theta) = j$ and $pj=i$.
\end{example}

\begin{example}[\ref{xp}]
Let $H=R[t_1,t_2]/(t_1^p-t_1,t_2^p)$, so $\Sp(H)=\Zp\times\alf{p}$. The matrix associated to $H$ is 
\[B=\mx{1 & 0 \\ 0 & 0}.\]
Let
\begin{align*}
A = \Th^{-1}B\Thp &= \mx{\pa^i & 0 \\ \theta & \pa^j}^{-1} \mx{1 & 0 \\ 0 & 0} \mx{\pa^{pi} & 0 \\ \theta^p & \pa^{pj}} \\
&=\mx{\pa^{(p-1)i} & 0 \\ -\pa^{(p-1)i-j}\theta & 0}.
\end{align*}
For $A$ to be in $ M_2(R)$, we require $i\ge 0$ and 
\[j-(p-1)i\le v_K(\theta) \le j,\] so the Hopf orders are
\[H_{i,j,\theta} = R[\pa^it_1+\theta t_2,\pa^j t_2],\;[j-(p-1)i\le v_K(\theta) \le j.\]
\end{example}

In particular, note that  $R[t_1,t_2]/(t_1^p-t_1,t_2^p)\cong R[t]/(t^p-t)\otimes R[t]/(t^p)$, and that the example above shows that there are Hopf orders in tensor products which are not tensor products of Hopf orders. This type of behavior has been noted before, for example in \cite{ChildsGreitherMossSauerbergZimmerman98} it is evident that most Hopf orders in $KC_p^n,\;n\ge 2$ are not Larson orders. A difference between our example and examples of the form $KC_p^n$, of course, is that our tensor factors are different from each other. Despite a perceived lack of compatibility between $R[t]/(t^p-t)$ and $R[t]/(t^p)$, most of the Hopf orders we construct --the ones with $v_K(\theta)<j$ -- involve some interaction between the two Hopf algebras.

\begin{example}[\ref{Zp}]
We return to the Hopf algebra  $(RC_p^*)^2=R[t_1,t_2]/(t_1^p-t_1,t_2^p-t_2)$, which represents the constant elementary abelian group scheme of order $p^2$. In this case, $B=I$ so we need to determine whether $\Th^{-1}\Thp\in M_2(R)$. We have
\[\Th^{-1}\Thp = \mx{\pa^{i(p-1)} & 0 \\  \pa^{-j}\theta^p -  \pa^{(p-1)i-j}\theta & \pa^{j(p-1)}}. \]
Evidently, we require $i,j\ge 0$. Furthermore, we need $v_K(\pa^{-j}\theta^p -  \pa^{(p-1)i-j}\theta ) \ge 0$; equivalently, 
\[v_K(\theta^{(p-1)i}-\pa^{(p-1)i}) \ge j-v_K(\theta).\]
Thus, $\theta^{(p-1)i} \equiv \pa^{(p-1)i} \bmod \pa^{j-v_K(\theta)}$, alternatively,
\[H_{i,j,\theta}=R[\pa^i t_1 + \theta t_2, \pa^j t_2],\; i,j,\ge 0,\;\theta \equiv \zeta\pa^i \bmod \pa^{\lfloor\frac{j-v_K(\theta)}{p-1}\rfloor}R,\; \zeta\in\mathbb{F}_p\subset R.\]
\end{example}
\begin{example}[\ref{mono}]
Finally, let $B$ be the cyclic permutation matrix
\[B=\mx{0&1\\1&0}. \]
Then the corresponding (monogenic) Hopf algebra is \[H=R[t_1,t_2]/(t_1^p-t_2,t_2^p-t_1) = R[t]/(t^{p^2}-t).\]

In this case,
\[\Th^{-1}B\Thp = \mx{\pa^{-i}\theta^p & \pa^{pj-i} \\ \pa^{pi-j}-\pa^{-(i+j)}\theta^{p+1} & -\pa^{(p-1)j-i}\theta}.\]

For this matrix to have coefficients in $R$, we need $pj \ge i$.  We also need
\begin{align*}
v_K(\theta) &\ge i/p \\
v_K(\theta) &\ge i-(p-1)j\\
v_K(\pa^{(p+1)i} - \theta^{p+1} ) &\ge i+j.
\end{align*}

In Example \ref{apn2too} we showed that $i/p \ge i-(p-1)j$ when $pj\ge i$, however this seems to be the only simplification possible. Thus the Hopf orders are
\[H_{i,j,\theta}=R[\pa^i t^p+\theta t, \pa^j t], \;  pj \ge i, \;v_K(\theta) \ge i/p,\; v_K(\pa^{(p+1)i} - \theta^{p+1} ) \ge i+j,\]
and $H_{i,j,\theta}$ is monogenic if and only if $v_K(\theta) = j$ and $pj=i$.

\end{example}

\bibliographystyle{amsalpha}
\bibliography{C:/Research/MyRefs}

\newcommand{\etalchar}[1]{$^{#1}$}
\providecommand{\bysame}{\leavevmode\hbox to3em{\hrulefill}\thinspace}
\providecommand{\MR}{\relax\ifhmode\unskip\space\fi MR }
% \MRhref is called by the amsart/book/proc definition of \MR.
\providecommand{\MRhref}[2]{%
  \href{http://www.ams.org/mathscinet-getitem?mr=#1}{#2}
}
\providecommand{\href}[2]{#2}
\begin{thebibliography}{CGM{\etalchar{+}}98}

\bibitem[Abr06]{Abrashkin06}
V.~Abrashkin, \emph{Galois modules arising from {F}altings's strict modules},
  Compos. Math. \textbf{142} (2006), no.~4, 867--888. \MR{2249533
  (2007c:14045)}

\bibitem[Byo93]{Byott93a}
Nigel~P. Byott, \emph{Cleft extensions of {H}opf algebras. {II}}, Proc. London
  Math. Soc. (3) \textbf{67} (1993), no.~2, 277--304. \MR{1226603 (94m:16044)}

\bibitem[Byo04]{Byott04}
\bysame, \emph{Monogenic {H}opf orders and associated orders of valuation
  rings}, J. Algebra \textbf{275} (2004), no.~2, 575--599. \MR{2052627
  (2005e:16058)}

\bibitem[CGM{\etalchar{+}}98]{ChildsGreitherMossSauerbergZimmerman98}
Lindsay~N. Childs, Cornelius Greither, David~J. Moss, Jim Sauerberg, and Karl
  Zimmermann, \emph{Hopf algebras, polynomial formal groups, and {R}aynaud
  orders}, Mem. Amer. Math. Soc. \textbf{136} (1998), no.~651, viii+118.
  \MR{1629460 (2000c:14067)}

\bibitem[Chi00]{Childs00}
Lindsay~N. Childs, \emph{Taming wild extensions: {H}opf algebras and local
  {G}alois module theory}, Mathematical Surveys and Monographs, vol.~80,
  American Mathematical Society, Providence, RI, 2000. \MR{1767499
  (2001e:11116)}

\bibitem[CS05]{ChildsSmith05}
Lindsay~N. Childs and Harold~H. Smith, III, \emph{Dual {H}opf orders in group
  rings of elementary abelian {$p$}-groups}, J. Algebra \textbf{294} (2005),
  no.~2, 489--518. \MR{2183362 (2006i:16055)}

\bibitem[CU03]{ChildsUnderwood03}
Lindsay~N. Childs and Robert~G. Underwood, \emph{Cyclic {H}opf orders defined
  by isogenies of formal groups}, Amer. J. Math. \textbf{125} (2003), no.~6,
  1295--1334. \MR{2018662 (2004i:11138)}

\bibitem[CU04]{ChildsUnderwood04}
\bysame, \emph{Duals of formal group {H}opf orders in cyclic groups}, Illinois
  J. Math. \textbf{48} (2004), no.~3, 923--940. \MR{2114259 (2005k:16079)}

\bibitem[dJ93]{deJong93}
A.~J. de~Jong, \emph{Finite locally free group schemes in characteristic {$p$}
  and {D}ieudonn\'e modules}, Invent. Math. \textbf{114} (1993), no.~1,
  89--137. \MR{1235021 (94j:14043)}

\bibitem[Fon75]{Fontaine75a}
Jean-Marc Fontaine, \emph{Groupes finis commutatifs sur les vecteurs de
  {W}itt}, C. R. Acad. Sci. Paris S\'er. A-B \textbf{280} (1975), Ai,
  A1423--A1425. \MR{0374153 (51 \#10353)}

\bibitem[Gre92]{Greither92}
C.~Greither, \emph{Extensions of finite group schemes, and {H}opf {G}alois
  theory over a complete discrete valuation ring}, Math. Z. \textbf{210}
  (1992), no.~1, 37--67. \MR{1161169 (93f:14024)}

\bibitem[Kis07]{Kisin07}
Mark Kisin, \emph{Modularity of 2-dimensional {G}alois representations},
  Current developments in mathematics, 2005, Int. Press, Somerville, MA, 2007,
  pp.~191--230. \MR{2459302 (2010a:11098)}

\bibitem[KM07]{KochMalagon07}
Alan Koch and Audrey Malagon, \emph{{$p$}-adic order bounded group valuations
  on abelian groups}, Glasg. Math. J. \textbf{49} (2007), no.~2, 269--279.
  \MR{2347260 (2008h:16037)}

\bibitem[Koc07]{Koch07a}
Alan Koch, \emph{Hopf orders via {B}reuil modules}, J. Algebra \textbf{317}
  (2007), no.~1, 291--305. \MR{2360150 (2008i:16053)}

\bibitem[Koc12]{Koch12a}
\bysame, \emph{Breuil-{K}isin modules and {H}opf orders in cyclic group rings},
  Comm. Algebra \textbf{40} (2012), no.~2, 607--631. \MR{2889485}

\bibitem[Lar76]{Larson76}
Richard~Gustavus Larson, \emph{Hopf algebra orders determined by group
  valuations}, J. Algebra \textbf{38} (1976), no.~2, 414--452. \MR{0404413 (53
  \#8215)}

\bibitem[Lau10]{Lau10}
Eike Lau, \emph{Frames and finite group schemes over complete regular local
  rings}, Doc. Math. \textbf{15} (2010), 545--569. \MR{2679066 (2011g:14107)}

\bibitem[MRT13]{MezardRomagnyTossici13}
A.~M{\'e}zard, M.~Romagny, and D.~Tossici, \emph{Models of group schemes of
  roots of unity}, Ann. Inst. Fourier (Grenoble) \textbf{63} (2013), no.~3,
  1055--1135. \MR{3137480}

\bibitem[Rom12]{Romagny12}
Matthieu Romagny, \emph{Effective models of group schemes}, J. Algebraic Geom.
  \textbf{21} (2012), no.~4, 643--682. \MR{2957691}

\bibitem[TO70]{TateOort70}
John Tate and Frans Oort, \emph{Group schemes of prime order}, Ann. Sci.
  \'Ecole Norm. Sup. (4) \textbf{3} (1970), 1--21. \MR{0265368 (42 \#278)}

\bibitem[Tos08]{Tossici08}
Dajano Tossici, \emph{Effective models and extension of torsors over a discrete
  valuation ring of unequal characteristic}, Int. Math. Res. Not. IMRN (2008),
  Art. ID rnn111, 68. \MR{2448085 (2009j:14060)}

\bibitem[Tos10]{Tossici10}
\bysame, \emph{Models of {$\mu_{p^2,K}$} over a discrete valuation ring}, J.
  Algebra \textbf{323} (2010), no.~7, 1908--1957, With an appendix by Xavier
  Caruso. \MR{2594655 (2011c:14127)}

\bibitem[UC06]{UnderwoodChilds06}
Robert~G. Underwood and Lindsay~N. Childs, \emph{Duality for {H}opf orders},
  Trans. Amer. Math. Soc. \textbf{358} (2006), no.~3, 1117--1163. \MR{2187648
  (2006g:16089)}

\bibitem[Und94]{Underwood94}
Robert~G. Underwood, \emph{{$R$}-{H}opf algebra orders in {$KC_{p^2}$}}, J.
  Algebra \textbf{169} (1994), no.~2, 418--440. \MR{1297158 (95k:16055)}

\bibitem[Und96]{Underwood96}
Robert Underwood, \emph{The valuative condition and {$R$}-{H}opf algebra orders
  in {$KC_{p^3}$}}, Amer. J. Math. \textbf{118} (1996), no.~4, 701--743.
  \MR{1400057 (97e:11150)}

\bibitem[Und06]{Underwood06}
Robert~G. Underwood, \emph{Realizable {H}opf orders in {$KC_8$}}, Int. Math.
  Forum \textbf{1} (2006), no.~17-20, 833--851. \MR{2250840 (2007d:11129)}

\bibitem[Und08]{Underwood08}
\bysame, \emph{Realizable {H}opf orders in {$KC_{p^3}$}}, J. Algebra
  \textbf{319} (2008), no.~11, 4426--4455. \MR{2416729 (2009b:16096)}

\bibitem[VD74]{WeisfeilerDolgacev74}
B.~Ju. Ve{\u\i}sfe{\u\i}ler and I.~V. Dolga{\v{c}}ev, \emph{Unipotent group
  schemes over integral rings}, Izv. Akad. Nauk SSSR Ser. Mat. \textbf{38}
  (1974), 757--799. \MR{0376697 (51 \#12872)}

\end{thebibliography}
\end{document}